\documentclass{amsart}
\usepackage{graphicx}
\usepackage{bbm}
\usepackage{epsfig}
\usepackage{amssymb}
\usepackage{amsmath,enumerate,color}
\usepackage{epstopdf}
\usepackage{amsfonts}
\usepackage{siunitx}
\usepackage{booktabs}
\usepackage{mathrsfs}
\usepackage{cases}
\usepackage{subcaption}
\usepackage{caption}
\usepackage{graphicx}
\usepackage{hyperref}
\usepackage{comment,enumerate,multicol,xspace}
\usepackage{soul}
\usepackage{enumitem}
\usepackage[ruled,linesnumbered]{algorithm2e}

\newcounter{mnote}
\setcounter{mnote}{0}

\let\oldmarginpar\marginpar
\renewcommand\marginpar[1]{\-\oldmarginpar[\raggedleft\footnotesize #1]%
{\raggedright\footnotesize #1}}

\newtheorem{theorem}{Theorem}[section]
\newtheorem{lemma}[theorem]{Lemma}

\theoremstyle{definition}

\theoremstyle{remark}
\newtheorem{remark}[theorem]{Remark}

\numberwithin{equation}{section}



\SetKwInput{KwIn}{Input}
\SetKwInput{KwOut}{Output}
\SetKwRepeat{Do}{do}{while}
\SetAlgoCaptionSeparator{: } 

\begin{document}

\title[Forward-Backward Stochastic Jump Neural Networks]{FBSJNN: A Theoretically Interpretable and Efficiently Deep Learning method for Solving Partial Integro-Differential Equations}
\thanks{This work was supported by the National Natural Science Foundation of China (Grant Nos. 12271367, 92470119)}

\author{Zaijun Ye}
\address{Department of Mathematics, Shanghai Normal University, Shanghai, 200234, China}
\email{yezaijun@outlook.com}

\author{Wansheng Wang}
\address{Department of Mathematics, Shanghai Normal University, Shanghai, 200234, China}
\email{w.s.wang@163.com}
\thanks{The second author is the corresponding author.}


\begin{abstract}
    We propose a novel framework for solving a class of Partial Integro-Differential Equations (PIDEs) and Forward-Backward Stochastic Differential Equations with Jumps (FBSDEJs) through a deep learning-based approach. This method, termed the Forward-Backward Stochastic Jump Neural Network (FBSJNN), is both theoretically interpretable and numerically effective. Theoretical analysis establishes the convergence of the numerical scheme and provides error estimates grounded in the universal approximation properties of neural networks. In comparison to existing methods, the key innovation of the FBSJNN framework is that it uses a single neural network to approximate both the solution of the PIDEs and the non-local integral, leveraging Taylor expansion for the latter. This enables the method to reduce the total number of parameters in FBSJNN, which enhances optimization efficiency. Numerical experiments indicate that the FBSJNN scheme can obtain numerical solutions with a relative error on the scale of $10^{-3}$.
\end{abstract}

\maketitle

\section{Introduction}

In this paper, we investigate the deep learning method for solving Partial Integro-Differential Equations (PIDEs).
PIDEs are widely applicable to model phenomena that exhibit both spatial variation and memory effects, making them valuable tools across fields such as finance, engineering, biology, and so on.
However, due to the ``curse of dimensionality", obtaining numerical solutions for high-dimensional PIDEs remains challenging and requires urgent attention.
By employing the well-known nonlinear Feynman-Kac formula (see, e.g., \cite{pengProbabilisticInterpretationSystems1991, barlesBackwardStochasticDifferential1997}), the PIDEs can be transformed into Forward-Backward Stochastic Differential Equations with Jumps (FBSDEJs), and vice versa, providing multiple perspectives to address these challenges more effectively.

According to the Universal Approximation Theorem (UAT, see, e.g., \cite{cybenkoApproximationSuperpositionsSigmoidal1989, hornikMultilayerFeedforwardNetworks1989, stinchcombeUniversalApproximationUsing1989}), Neural Networks (NNs) with sufficient width are dense in the space of continuous functions, which makes NNs a valuable tool for approximating solutions to the PIDEs and FBSDEJs (see, e.g., \cite{jentzenProofThatDeep2019,kutyniokTheoreticalAnalysisDeep2022,shinChapter6Theoretical2024}).
Several deep learning methods, inspired by deep neural networks, have been successfully applied to solve PIDEs and FBSDEJs, including: 
Deep Ritz Method (see, e.g., \cite{eDeepRitzMethod2018, duanConvergenceRateAnalysis2022}),
Deep Galerkin Method (see, e.g., \cite{sirignanoDGMDeepLearning2018, saporitoPathDependentDeepGalerkin2021}),
Deep Splitting Methods (see, e.g., \cite{beckDeepSplittingMethod2021, freyConvergenceAnalysisDeep2022}),
Physics-Informed Neural Networks (PINN, see, e.g., \cite{raissiPhysicsinformedNeuralNetworks2019, luDeepXDEDeepLearning2021}),
Deep BSDE Method (Deep BSDE, see, e.g., \cite{eDeepLearningBasedNumerical2017, hanSolvingHighdimensionalPartial2018,hanConvergenceDeepBSDE2020,anderssonConvergenceRobustDeep2023}),
Deep Backward Schemes (DBDP, see, e.g., \cite{hureDeepBackwardSchemes2020, phamNeuralNetworksbasedBackward2021, germainApproximationErrorAnalysis2022}),
and so on.
These approaches represent significant advancements in addressing a wide range of problems, including those in high-dimensional cases. 

While extensive works (see, e.g., \cite{
    chan-wai-namMachineLearningSemi2019,
    beckMachineLearningApproximation2019,
    castroDeepLearningSchemes2022, 
    freyDeepNeuralNetwork2022, 
    germainApproximationErrorAnalysis2022,
    luTemporalDifferenceLearning2023, 
    wangDeepLearningNumerical2023, 
    kapllaniDeepLearningAlgorithms2024,
    neufeldRectifiedDeepNeural2025}) have advanced the theoretical foundations and variations of these methods, they still exhibit certain limitations in various aspects.
Methods inspired by PINNs \cite{raissiPhysicsinformedNeuralNetworks2019} model solutions to PDEs as differentiable functions, embedding them into the problem and boundary conditions to define loss terms. Despite their intuitive formulation, these methods lack strong theoretical guarantees for solution accuracy and behavior.
Other methods, like those inspired by Deep BSDE \cite{eDeepLearningBasedNumerical2017} and DBDP \cite{hureDeepBackwardSchemes2020} approaches, have demonstrated good convergence and consistency, but they come with drawbacks in practice. 
The iterative structure of Deep BSDE approaches, which approximate gradient terms and propagate the solution to a terminal condition, creates a highly nested network structure that complicates optimization. 
Meanwhile, approaches that decompose FBSDEs into sequential steps are susceptible to cumulative error and often require manual tuning to achieve optimal results, as optimization methods do not always converge as expected.  

To address these limitations, we propose a novel framework that aims to balance theoretical properties of the numerical solution with computational efficiency in solving complex high-dimensional PIDEs and FBSDEJs. Referring to the work \cite{raissiForwardBackwardStochastic2018, broux-quemeraisDeepLearningScheme2024}, after transforming the PIDEs into FBSDEJs, we discretize time using the Euler-Maruyama scheme and directly employ neural networks to approximate the numerical solution. The loss is computed and optimized within each discretized interval. Automatic differentiation is utilized to calculate gradients, and Taylor expansion is applied to simplify non-local integral operations. We also conduct a detailed analysis to confirm the consistency of the proposed method. Further numerical experiments verify the effectiveness of the numerical approach developed in this paper. We called this method Forward-Backward Stochastic Jump Neural Networks (FBSJNN). Obviously, this method can be also applied to PDEs and FBSDEs without jump.

This work not only proposes a theoretically interpretable method based on deep learning but also provides new insights into the relationship between two neural network functions. The first is the neural network function obtained through optimizing a loss function, while the second is the neural network function guaranteed by the universal approximation theorem, which can approximate a target function with arbitrary accuracy. We find that the error of the optimized function can be bounded by the error of the universally approximating function. Our theoretical analysis shows that as the time step approaches zero, the errors of both functions converge, highlighting a previously unexplored connection. This discovery deepens the theoretical understanding of using deep learning to solve PDEs.

The organization of this paper is as follows. In Section \ref{section:Forward-Backward Stochastic Jump Neural Networks for Solving PIDEs}, we will introduce the core concepts of the proposed framework and establish the associated numerical scheme. Following this, Section \ref{section:Consistency Analysis} will provide a theoretical analysis of the numerical scheme, yielding error estimates and verifying the consistency of the proposed approach. Finally, a series of numerical examples will be presented in Section \ref{section:Numerical Examples} to demonstrate the effectiveness of the proposed algorithm.

\section{Forward-backward stochastic jump neural networks}
\label{section:Forward-Backward Stochastic Jump Neural Networks for Solving PIDEs}
In this section, we explore the forward-backward stochastic jump neural networks (FBSJNNs) to solve PIDEs and FBSDEJs. First, we utilize the nonlinear Feynman-Kac formula to transform PIDEs into FBSDEJs. Then we use the Euler-Maruyama scheme to obtain the discrete approximation of FBSDEJs. By approximating the solution, gradients, and non-local integrals in different ways, we use deep learning techniques to solve the discrete FBSDEJs, and ultimately obtain the numerical solutions of PIDEs.

\subsection{PIDEs} We consider the following second-order semi-linear parabolic PIDEs
\begin{equation}\label{eqn:PIDE}
    \left\{
    \begin{aligned}
        -\partial_tu-\mathcal{L}u-f\left(t,x,u,\sigma^\top\nabla_x u,I_t\right) & =0,    & (t,x) & \in[0,T]\times\mathbb{R}^d, \\
        u(T,x)                                                           & =g(x), & x     & \in\mathbb{R}^d,
    \end{aligned}
    \right.
\end{equation}
where, $d\geq1$ denotes the dimension, $T>0$ is the terminal time, $g:\mathbb{R}^d\mapsto\mathbb{R}$, and $f:[0,T]\times\mathbb{R}^d\times\mathbb{R}\times\mathbb{R}^d\times\mathbb{R}\mapsto\mathbb{R}$. Here $E := \mathbb{R}^l\setminus\{0\}$, and the integro-differential operator $\mathcal{L}$ is defined as follows:
$$
\begin{aligned}
    \mathcal{L}u := & \; \frac{1}{2}\mathrm{Tr}\left(\sigma\sigma^\top\nabla_x^2u\right)+\big\langle b,\nabla_xu \big\rangle          \\
                    & + \int_E \left(u\left(t,x+\beta(t,x,e)\right) -u(t,x) - \left\langle \nabla_x u, \beta(t,x,e)\right\rangle \right)\lambda(de),
\end{aligned}
$$
where $\mathrm{Tr}(\cdot)$ denotes the trace of a matrix, $\sigma:[0,T]\times\mathbb{R}^d\mapsto\mathbb{R}^{d\times d}$ is a matrix function, $\nabla_x^2u$ is the Hessian matrix of $u$, $\langle \cdot,\cdot \rangle$ denotes the inner product, $b:[0,T]\times\mathbb{R}^d\mapsto \mathbb{R}^d$ and $\beta:[0,T]\times\mathbb{R}^d\times E\mapsto \mathbb{R}^d$ are vector functions and $\lambda(de)$ is the $\sigma$-finite L\'{e}vy measure defined on $(E,\mathcal{E})$ with $\mathcal{E}$ being the Borel field corresponding to $E$. The integral operator $I_t$ is formally defined as follows:
$$I_t := \int_E \left[u\left(t,x+\beta(t,x,e)\right) -u(t,x) \right] \lambda(de) .$$

\subsection{FBSDEJs}
Let us consider a probability space $\left(\Omega,\mathcal{F},(\mathcal{F}_t)_{t\geq0},\mathbb{P}\right)$. The filtration $\mathbb{F} := (\mathcal{F}_t)_{t\geq0}$ is generated by two mutually independent stochastic processes: a standard $d$-dimensional Brownian motion $\{W_t\}_{t\geq 0}$ and a Poisson measure $\mu$ defined on $\mathbb{R}_+\times E$. We introduce the compensated random measure $\nu(dt,de):= dt\lambda(de)$ for $\mu$, such that $\{\tilde{\mu}=(\mu-\nu),[0,t]\times A\}_{t\geq0}$ is a martingale. Furthermore, the filtration $\mathbb{F}$ satisfies: $\mathcal{F}_0$ contains all $\mathbb{P}$-null sets in $\mathcal{F}$, and $\mathcal{F}_{t^+} := \bigcap_{\epsilon> 0}\mathcal{F}_{t+\epsilon}=\mathcal{F}_{t} $.

The differential form of Forward Stochastic Differential Equations (FSDEs) as shown below contains drift, diffusion, and jump components:
\begin{equation}\label{eqn:FSDE}
    dX_t = b \left(t, X_{t} \right) dt+ \sigma \left(t,X_{t}\right) dW_t + \int_{E}\beta\left(t,X_{t},e\right)\tilde{\mu}(dt,de).
\end{equation}
Then for the stochastic process $u(t,X_t)$, according to It\^{o}'s formula, we have
$$
    \begin{aligned}
        du(t,X_t)  
        = & - f\left(t,X_{t},u,\sigma^\top\nabla_x u,I_t\right)dt + \left\langle \sigma^\top \nabla_x u(t,X_{t}),dW_t \right\rangle      \\
                     & + \int_{E} \hat{u}_t(e) \tilde{\mu}(dt,de), \\
    \end{aligned}
$$
where $\hat{u}_t(e) := u\left(t,X_{t} + \beta (t,X_{t},e)\right) - u\left(t,X_{t}\right)$. Let 
$$
    \left\{
    \begin{aligned}
        Y_t          & = u(t,X_t),                                                      \\
        Z_t          & = \sigma^\top(t,X_{t}) \nabla_xu(t,X_{t}),                        \\
    \end{aligned}
    \right.
$$
we obtain the following Backward Stochastic Differential Equations (BSDEs):
\begin{equation}\label{eqn:BSDE}
    dY_t =- f\big(t,X_t,Y_t,Z_t,I_t \big)dt + Z_t^\top dW_t   + \int_E \hat{u}_t(e) \tilde{\mu}(dt,de) .
\end{equation}

Combining equations \eqref{eqn:FSDE} and \eqref{eqn:BSDE}, we ultimately transform the PIDEs \eqref{eqn:PIDE} into FBSDEJs:
\begin{equation}\label{eqn:FBSDEJ-diffform}
    \left\{
    \begin{aligned}
        dX_t =& b \left(t, X_{t} \right) dt+ \sigma \left(t,X_{t}\right) dW_t + \int_{E}\beta\left(t,X_{t},e\right)\tilde{\mu}(dt,de), \\
        dY_t =&- f\big(t,X_t,Y_t,Z_t,I_t \big)dt + Z_t^\top dW_t   + \int_E \hat{u}_t(e) \tilde{\mu}(dt,de),\\
    \end{aligned}
    \right.
\end{equation}
or in Integral form:
\begin{equation}\label{eqn:FBSDEJ}
    \left\{
    \begin{aligned}
        X_t & = X_0 +  \int_0^t\left( b \left(s, X_s \right) ds+ \sigma \left(s,X_s \right) dW_s + \int_{E}\beta\left(s,X_s,e\right)\tilde{\mu}(ds,de)\right), \\
        Y_t & = g(X_T) + \int_t^T \bigg(f\left(s,X_s,Y_s,Z_s,I_s\right)ds -   Z_s^\top dW_s \\
        &\qquad\qquad\qquad\qquad -  \int_{E} \left(u\left(t,X_s+\beta(t,X_s,e)\right) -u(t,X_s)\right) \tilde{\mu}(ds,de)\bigg).\\
    \end{aligned}
    \right.
\end{equation}
Since $X_t$ and $Y_t$ have different integration intervals with respect to the time variable $t$, they are referred to as forward and backward processes, respectively. The transformation of the PIDE \eqref{eqn:PIDE} into the FBSDEJ \eqref{eqn:FBSDEJ} is known as the nonlinear Feynman-Kac formula.

Let quadruple $(X^x_t,Y^x_t,Z^x_t,I^x_t)$ denote the solution of the FBSDEJ \eqref{eqn:FBSDEJ} with initial condition $X_0 = x$. According to 
Barles et al. \cite{barlesBackwardStochasticDifferential1997}
and 
Delong \cite{delongBackwardStochasticDifferential2013}
, a unique solution quadruple exists for the FBSDEJ \eqref{eqn:FBSDEJ} under Assumptions \ref{assumption:A1} - \ref{assumption:A5}, which will be introduced in section \ref{section:Preliminaries}. This can be summarized as the following lemma whose proof can be found in \cite{delongBackwardStochasticDifferential2013}.

\begin{lemma}[Theorem 4.1.3 and Proposition 4.1.1 in \cite{delongBackwardStochasticDifferential2013}]\label{lemma:existence_uniqueness}\label{lemma:FBSDEJ_PIDE}
    There exists a unique solution $(X^x_t,Y^x_t,Z^x_t,I^x_t)$ to the FBSDEJ \eqref{eqn:FBSDEJ} under Assumptions \textnormal{\ref{assumption:A1} - \ref{assumption:A5}}.
\end{lemma}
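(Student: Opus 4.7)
The plan is to exploit the decoupled structure of \eqref{eqn:FBSDEJ}: the coefficients $b$, $\sigma$, $\beta$ of the forward equation do not depend on $(Y,Z)$, so I would first establish existence/uniqueness for $X^x$ and then treat the BSDE with $X^x$ and the terminal value $g(X^x_T)$ as given data. Throughout I would use the standard spaces $\mathcal{S}^2$ (c\`adl\`ag $\mathbb{F}$-adapted processes with $\mathbb{E}[\sup_t |\cdot|^2]<\infty$), $\mathcal{H}^2(W)$, and $\mathcal{H}^2(\tilde\mu)$ (predictable integrands square-integrable against $dt\otimes \mathbb{P}$, respectively $dt\otimes\lambda(de)\otimes\mathbb{P}$), and invoke Assumptions \ref{assumption:A1}--\ref{assumption:A5} in the usual Lipschitz / linear-growth form.

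\textbf{Step 1 (forward SDE).} I would set up a Picard iteration
$$
X^{(n+1)}_t = x + \int_0^t b(s,X^{(n)}_s)\,ds + \int_0^t \sigma(s,X^{(n)}_s)\,dW_s + \int_0^t\!\int_E \beta(s,X^{(n)}_s,e)\,\tilde\mu(ds,de),
$$
and show it is Cauchy in $\mathcal{S}^2([0,T])$. The key ingredients are the Burkholder--Davis--Gundy inequality applied separately to the Brownian integral and to the compensated Poisson integral (whose predictable quadratic variation is $\int_0^t\!\int_E |\beta(s,X_s,e)|^2\lambda(de)\,ds$), followed by Gr\"onwall's lemma. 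Uniqueness then follows by subtracting two putative solutions and closing the same estimate.

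\textbf{Step 2 (backward SDE).} The moment bound on $X^x$ from Step 1 together with the growth assumption on $g$ gives $g(X^x_T)\in L^2(\mathcal{F}_T)$. I would then apply the martingale representation theorem for the filtration generated jointly by $W$ and $\mu$: every square-integrable $\mathbb{F}$-martingale decomposes uniquely as a stochastic integral against $dW$ plus one against $\tilde\mu$. This lets me define, for each candidate $(Y,Z,U)\in\mathcal{S}^2\times\mathcal{H}^2(W)\times\mathcal{H}^2(\tilde\mu)$, the image $(\tilde Y,\tilde Z,\tilde U)$ produced by conditioning the driver $f(s,X^x_s,Y_s,Z_s,\int_E U_s(e)\lambda(de))$ plus terminal value $g(X^x_T)$ against $\mathcal{F}_t$ and reading off the integrands. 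Equipping the product space with the equivalent norm weighted by $e^{\lambda t}$ for $\lambda$ large enough, the Lipschitz continuity of $f$ in $(y,z,i)$ turns this map into a strict contraction, and Banach's fixed-point theorem yields the unique triple $(Y^x,Z^x,\hat u^x)$; then $I^x_t := \int_E \hat u^x_t(e)\lambda(de)$ closes the quadruple.

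\textbf{Main obstacle.} The primary technical difficulty is the simultaneous control of the compensated Poisson jump term and the Brownian martingale term in a single $L^2$ framework: one needs the non-local integrand $\hat u_t(e)$ to sit in $\mathcal{H}^2(\tilde\mu)$ and the driver $f$ to remain Lipschitz in the integrated argument $I_t$, which requires $\lambda$-integrability bounds on $\beta$ of the form $\int_E |\beta(t,x,e)|^2\lambda(de)<\infty$ (hidden inside Assumptions \ref{assumption:A1}--\ref{assumption:A5}). Once these integrability conditions are verified so that the BDG and representation theorems apply, the contraction argument is classical and the result reduces to Theorem 4.1.3 and Proposition 4.1.1 of \cite{delongBackwardStochasticDifferential2013}.
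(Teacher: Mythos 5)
Your proposal is correct and follows the same route as the source the paper relies on: the paper gives no proof of this lemma beyond the citation, deferring entirely to Delong's Theorem 4.1.3 and Proposition 4.1.1, which are established exactly by your two-step scheme (Picard iteration with BDG and Gr\"onwall for the decoupled forward jump-SDE, then martingale representation plus a weighted-norm contraction for the backward equation). The one point to state precisely is the obstacle you flag at the end: the functional $U\mapsto\int_E U(e)\,\lambda(de)$ is not bounded on $L^2(\lambda)$ for a merely $\sigma$-finite L\'evy measure, and the contraction goes through because the driver's dependence on the jump integrand is channelled through the weight $\delta$ of Assumption \ref{assumption:A5} (or a finite L\'evy measure), so that Cauchy--Schwarz together with Assumption \ref{assumption:A4} yields the required Lipschitz bound in the $I$-argument.
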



\subsection{Time dicretization of FBSDEJs}
In this subsection, we discretize the FBSDEJ \eqref{eqn:FBSDEJ} for the time variable and handle the integral terms.

Given a uniform partition $\mathbb{T}$ of the interval $[0,T]$, dividing it into $N$ sub-intervals with length $\Delta t = T/N$, we get the partition nodes $t_n = n\Delta t,~ n=0,1,\dots,N$. We consider the FBSDEJ \eqref{eqn:FBSDEJ} of the following form on each interval $[t_n,t_{n+1}]$:
\begin{equation}\label{eqn:FBSDEJ-discrete-form}
    \left\{
    \begin{aligned}
        X_t & = X_n +  \int_{t_n}^{t_{n+1}}\left( b \left(s, X_s \right) ds+ \sigma \left(s,X_s \right) dW_s + \int_{E}\beta\left(s,X_s,e\right)\tilde{\mu}(ds,de)\right), \\
        Y_t & = Y_n - \int_{t_n}^{t_{n+1}} \left(f\left(s,X_s,Y_s,Z_s,I_s\right)ds -   Z_s^\top dW_s -  \int_{E} \hat{u}_n(e) \tilde{\mu}(ds,de)\right).\\
    \end{aligned}
    \right.
\end{equation}

We discretize the FBSDEJ \eqref{eqn:FBSDEJ-discrete-form} using the Euler-Maruyama method:
\begin{equation}\label{eqn:discret-FBSDEJ}
    \left\{
    \begin{aligned}
        X_{n+1} =& X_n +  b \left(t_n, X_n \right) \Delta t + \sigma \left(t_n,X_n \right) \Delta W_n + \int_{E}\beta\left(t_n,X_n,e\right)\tilde{\mu}((t_n,t_{n+1}],de), \\
        Y_{n+1} =& Y_{n} - f\left(t_n,X_n,Y_n,Z_n,I_n\right) \Delta t + Z_n^\top \Delta W_n + \int_{E} \hat{u}_n(e) \tilde{\mu}((t_n,t_{n+1}],de).\\
    \end{aligned}
    \right.
\end{equation}
with
\begin{equation*}
    \left\{
    \begin{aligned}
        Y_n          & = u(t_n,X_n),                                                      \\
        Z_n          & = \sigma^\top(t_n,X_n) \nabla_xu(t_n,X_n) ,                        \\
        \hat{u}_n(e) & = u\big(t_n,X_n + \beta (t_n,X_{n},e)\big) - u\big(t_n,X_{n}\big). \\
    \end{aligned}
    \right.
\end{equation*}

Now we are going to discretize the non-local integral term in \eqref{eqn:discret-FBSDEJ}. The non-local integral terms have been extensively studied, and neural networks have been used to approximate them in \cite{luTemporalDifferenceLearning2023,castroDeepLearningSchemes2022,wangDeepLearningNumerical2023}. However, using neural networks for this approximation will make the training process more challenging, leading to difficulties in converging to satisfactory results. To address this issue, we introduce the Taylor expansion to simplify the integral terms based on the characteristics of the jump terms.

According to the definition of $\mu$, we have
\begin{equation}\label{eqn:integral-term-expansion}
    \begin{aligned}
        \int_{E}\beta\big(t,X_{t},e\big)\tilde{\mu}(dt,de) = & \int_{E}\beta\big(t,X_{t},e\big)\mu(dt,de) -  \int_{E}\beta\big(t,X_{t},e\big)\nu(dt,de)       \\
        =                                                     & \sum_{i=1}^{\mu_t} \beta\big(t,X_{t},e_i\big) - \int_{E}\beta\big(t,X_{t},e\big)\lambda(de) dt,
    \end{aligned}
\end{equation}
where $\mu _t$ represents the number of occurrences of the random event $e$ in the time interval $dt$. From \eqref{eqn:integral-term-expansion}, we can see that the integral term in the forward process in \eqref{eqn:discret-FBSDEJ} can be decomposed into one part related to the Poisson distribution and another integral term. When $\beta$ is given and integrable, the integral term in the forward stochastic differential equation \eqref{eqn:FSDE} can be directly calculated.

However, for the backward stochastic process in \eqref{eqn:discret-FBSDEJ}, since the solution $u$ is unknown, the integral term cannot be directly calculated and requires special treatment. Since automatic differentiation can be used in neural networks to handle the gradient of the network with respect to the input, we consider using the Taylor series to handle the integral term. By the Taylor expansion of $\hat{u}_n(e)$ at $x= X_n$, we obtain
\begin{equation*}
    u\big(t_n,X_n + \beta (t_n,X_{n},e)\big) - u\big(t_n,X_{n}\big) = \big\langle \nabla_x u(t_n,X_{n}) ,\beta (t_n,X_{n},e) \big\rangle + o\big(\beta (t_n,X_{n},e)\big),
\end{equation*}
where $o(\cdot)$ denotes higher-order infinitesimal terms. Integrating both sides of the above equation, we have
\begin{equation}\label{Taylor-integro}
    \int_{E} \hat{u}_n(e) \lambda(de) = \int_{E} \big\langle \nabla_x u(t_n,X_{n}) ,\beta (t_n,X_{n},e) \big\rangle\lambda(de) + R(t_n,X_n).
\end{equation}
Here, $R(t_n,X_n) \triangleq \int_{E} o\big(\beta (t_n,X_{n},e)\big) \lambda(de)$. When $\beta$ is sufficiently smooth and integrable, and the integration domain $E$ is appropriate, the order of integration and inner product can be exchanged. Since $\nabla_x u$ can be obtained numerically through automatic differentiation, by ignoring the remainder term in \eqref{Taylor-integro}, we get
\begin{equation}
    \int_{E} \hat{u}_n(e) \tilde{\mu}((t_n,t_{n+1}],de) \approx \sum_{i=1}^{\mu_n} \hat{u}_n(e_i) - \int_{E} \big\langle \nabla_x u(t_n,X_{n}) ,\beta (t_n,X_{n},e) \big\rangle\lambda(de) \Delta t,
\end{equation}
where $\mu_n$ represents the number of occurrences of the random event $e$ in the time interval $[t_n,t_{n+1}]$.

\begin{remark}
    The above approximation reuses the gradient terms in practical computations, which improves computational efficiency and does not incur additional computational overhead. 
\end{remark}

Combining the above analysis, we get the following discrete scheme
\begin{equation}\label{eqn:discret-FBSDE-final}
\left\{
\begin{aligned}
    X_{n+1} = X_n & +  b \left(t_n, X_n \right) \Delta t + \sigma \left(t_n,X_n \right) \Delta W_n                          \\
    & + \sum_{i=1}^{\mu_n} \beta\left(t_n,X_{n},e_i\right) - \int_{E}\beta\left(t_n,X_{n},e\right)\lambda(de) \Delta t\\
    Y_{n+1} = Y_{n} & - f\left(t_n,X_n,Y_n ,Z_n,I_n\right) \Delta t +   Z_n^\top \Delta W_n           \\
    & + \sum_{i=1}^{\mu_n} \hat{u}_n(e_i) - \int_{E} \left\langle \nabla_x u(t_n,X_{n}) ,\beta (t_n,X_{n},e)\right\rangle\lambda(de) \Delta t.
\end{aligned}
\right.
\end{equation}

\subsection{Forward-backward stochastic jump neural network}
To construct the loss function for the PIDEs \eqref{eqn:PIDE} based on the FBSDEJs \eqref{eqn:FBSDEJ} and implement the Forward-backward stochastic jump neural networks (FBSJNN, \cite{raissiForwardBackwardStochastic2018}) fitting format, we explore the following approach. The main idea is to use a neural network to fit $Y_n$, calculate the gradient terms and integral terms in the discrete format using automatic differentiation, and construct the loss function over the entire time interval to obtain the numerical approximation of $u(t,x)$.

Given a uniform partition $\mathbb{T}$ of the interval $[0,T]$ with step length $\Delta t$, we denote the time-discretized solution of the forward stochastic process by $\mathcal{X}_n$, which is approximated by Monte-Carlo method. Then using a neural network $\mathcal{N}$ to approximate the backward stochastic process $\mathcal{Y}_n$:
\begin{equation*}
    \mathcal{Y}_n =\mathcal{N}_{n}^{\theta} :=  \mathcal{N}(t_n, \mathcal{X}_n; \theta).
\end{equation*}
By automatic differentiation, we obtain the gradient terms $\mathcal{Z}_n$ by the following formula:
\begin{equation*}
    \mathcal{Z}_n = \sigma_n^\top\nabla_{x}\mathcal{Y}_n = \sigma^\top(t_n, \mathcal{X}_n)\nabla_{x}\mathcal{N}_{n}^{\theta}.
\end{equation*}
The integral terms $\mathcal{I}_n$ can be approximated by the numerical integration
\begin{equation*}
    \begin{aligned}
        \mathcal{I}_n =& \frac{1}{\Delta t}\sum_{i=1}^{\mu_n} \hat{\mathcal{N}}_n^{\theta}(e_i) - \int_{E} \left\langle \nabla_x \mathcal{N}_n^{\theta} ,\beta (t_n,X_{n},e)\right\rangle\lambda(de) \\
        =& \frac{1}{\Delta t}\sum_{i=1}^{\mu_n} \left(\mathcal{N}(t_n,X_n + \beta (t_n,X_{n},e);\theta) - \mathcal{N}_n^\theta\right) - \int_{E} \left\langle \nabla_x \mathcal{N}_n^{\theta} ,\beta \right\rangle\lambda(de) 
        .
    \end{aligned}
\end{equation*}

For simplicity, let us denote by $\mathcal{T}:[0,T]\times\mathbb{R}^d\times\mathbb{R}\times\mathbb{R}^d\times\mathbb{R}\mapsto \mathbb{R}$ the map for the backward stochastic process \eqref{eqn:discret-FBSDEJ}, such that
\begin{equation}
    \begin{aligned}\label{eqn:transfer-fun}
        Y_{n+1} &= \mathcal{T}\left(t_n,X_n,Y_n,Z_n,I_n\right) \\
                &= Y_{n}  - f\left(t_n,X_n,Y_n ,Z_n,I_n\right) \Delta t +   Z_n^\top \Delta W_n + I_n \Delta t.
    \end{aligned}
\end{equation}
Then the loss function $\mathcal{L}$ is defined as follows:
\begin{equation}\label{eqn:loss}
    \mathcal{L}(\theta) := \frac{1}{N+1}\left(\sum_{n=0}^{N-1}\mathbb{E}|\mathcal{Y}_{n+1} - \mathcal{T}\left(t_n,\mathcal{X}_n,\mathcal{Y}_n,\mathcal{Z}_n, \mathcal{I}_n\right)|^2 + \mathbb{E}|\mathcal{Y}_N-g(\mathcal{X}_N)|^2\right).
\end{equation}

To minimize the loss function $\mathcal{L}$, we can use the gradient descent method (or other optimization algorithm such as Adam \cite{kingmaAdamMethodStochastic2017}) to update the parameters of the neural network $\theta$. In this view, the PDE numerical solution problem is similar to a specific optimization problem with a designed loss function embedded with the PDE structure. With the optimal parameter $\theta^*$, we obtain the numerical solution $\mathcal{Y}_n(\theta^*)$ of the PIDE
\begin{equation}\label{eqn:optimal-para}
    \left\{
        \begin{aligned}
            \theta^* =& \arg\min_{\theta}\mathcal{L}(\theta), \\
            \mathcal{Y}_n(\theta^*) =& \mathcal{N}_{n}^{\theta^*} , ~~n=0,1,\dots,N.
        \end{aligned}
        \right.
\end{equation}

In this loss function, there are two main ways in which the equation structure is embedded:
\begin{itemize}
    \item Utilizing the recursion \eqref{eqn:transfer-fun} to recursively obtain the next value at node $n+1$ from the current neural network fitted value $\mathcal{Y}_n$ at node $n$, and computing the squared loss with the neural network fitted value $\mathcal{Y}_{n+1}$ at node $n+1$;
    \item For the terminal node $N$ (i.e., at the terminal time $T$), computing the squared loss between the neural network fitted value $\mathcal{Y}_N$ and the terminal condition $g(\mathcal{X}_N)$.
\end{itemize}

\begin{figure}[htbp]
    \centering
    \includegraphics[width=\textwidth, trim=2.5cm 3.2cm 2.5cm 3cm, clip]{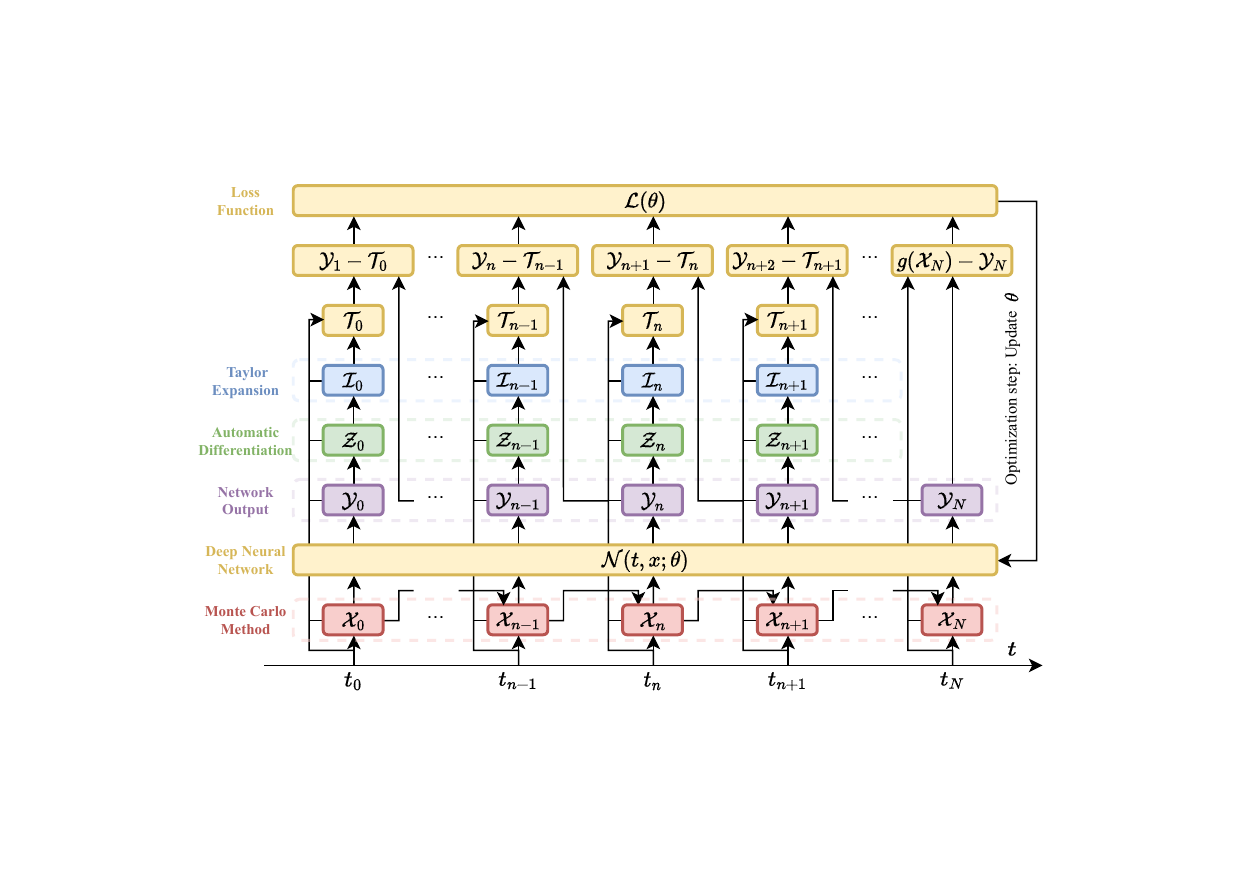}
    \caption{Forward-Backward Stochastic Jump Neural Network.}
    \label{fig:FBSJNN}
\end{figure}

All squared loss terms are averaged to obtain the final loss. Through the figure \ref{fig:FBSJNN} and pseudo-code, we can understand the construction process of the loss function and the algorithm more clearly.

\begin{algorithm}
    \caption{Forward-Backward Stochastic Jump Neural Network}
    \label{alg:FBSJNN}
    \KwIn{Given PIDE\eqref{eqn:PIDE}, time-equidistant discretization $\mathbb{T}$, deep neural network $\mathcal{N}(\cdot;\theta)$}


    Use the nonlinear Feynman-Kac formula to transform PIDE \eqref{eqn:PIDE} into FBSDEJ \eqref{eqn:FBSDEJ}\;

    Discretize FBSDEJ \eqref{eqn:FBSDEJ} by the proposed framework to obtain the discrete scheme \eqref{eqn:discret-FBSDE-final}\;
    \For{iter $<$ max iteration}{
        Simulate the forward stochastic process $\{\mathcal{X}_n\}$ using the Monte Carlo method\;

        Input $(t_n,\mathcal{X}_n)$ into the neural network $\mathcal{N}(\cdot;\theta)$ to obtain the output $\mathcal{Y}_n$, and use automatic differentiation to obtain the gradient $\mathcal{Z}_n$, then calculate the integral term $\mathcal{I}_n$ \;

        Calculate the loss function $\mathcal{L}$ according to \eqref{eqn:loss}, and optimize the neural network parameters $\theta$\;
    }
\end{algorithm}

Compared with existing works, such as Deep BSDE \cite{eDeepLearningBasedNumerical2017} or DBDP \cite{hureDeepBackwardSchemes2020}, the main innovation of FBSJNN framework proposed in this paper lies in the use of a smaller network and the handling of the integral term. For the integral term, we consider first expanding the integrand by using Taylor series, and thus simplify the integral calculation. We only use one network to approximate the solution of PIDEs, with the differential term being calculated by using automatic differentiation techniques, and the integral term calculated based on the differential term. Since only one network is used, compared to numerical methods that use separate neural networks to fit the differential and integral terms, the total parameter size used in the numerical method of this paper is smaller, which is more conducive to neural network parameter optimization.

We end this section with some remarks on the loss function \eqref{eqn:loss}.
\begin{remark}
    Why (${1}/{N+1}$)? Averaging is a common technique used in the field of machine learning. In the loss function of neural networks, taking the average over all samples can standardize the loss values, improve gradient stability, control the learning rate, and reduce the impact of noisy samples on model training. A more advanced approach is to use dynamic weighting to balance the sizes of different terms, which is an area where technical improvements can be made. 
\end{remark}
\begin{remark}
    Why $\mathbb{E}$? As $\mathcal{X}_n$ is generated by the Monte Carlo method, the expectation value of the loss function is the average loss value of the entire sample. This allows the training of the neural network to be conducted on batch data. This reflects the concept of “batch” in deep learning algorithms. Using batches has several benefits, including improved computational efficiency, better generalization by reducing over-fitting, and more stable gradient estimates during training.
\end{remark}
\begin{remark}
   When $\beta \equiv 0$, PIDEs simplify to PDEs, and the numerical methods and theoretical results developed in this paper remain applicable.
\end{remark}

\section{Consistency Analysis}
\label{section:Consistency Analysis}

In this section, we will analyze the consistency of the FBSJNN scheme.

\subsection{Preliminaries}\label{section:Preliminaries}

We will first present some useful notations, assumptions and theorems.

Unless otherwise specified, $K$ denotes a constant, and $|\cdot|$ represents either the absolute value or the $L^2$-norm.
Let us assume that:

\begin{enumerate}[label=(A\arabic*)]
    \item \label{assumption:A1} 
    Functions $b:[0,T]\times\mathbb{R}^d\to\mathbb{R}^d$ and $\sigma:[0,T]\times\mathbb{R}^d\to\mathbb{R}^{d\times d}$ satisfy Lipschitz condition. They are continuously differentiable ($C^1$) and their partial derivatives are bounded uniformly.
    
    \item \label{assumption:A2} 
    Measurable function $\beta(t,x,e) :[0,T]\times\mathbb{R}^d\times E\mapsto \mathbb{R}^d$ satisfies Lipschitz condition:
    \begin{equation*}
        \begin{aligned}
            |\beta(t,x,e)|                       & \leq K(1\wedge|e|),                                                  \\
            |\beta(t,x,e)-\beta(t,x^{\prime},e)| & \leq K|x-x^{\prime}|(1\wedge|e|).
        \end{aligned}
    \end{equation*}

    \item \label{assumption:A3} 
    Function $f$ satisfies Lipschitz condition with $\frac{1}{2}$-\text{H\"{o}lder} continuity on time domain, that is, for all $(t_{1},X_{1},Y_{1},Z_{1},I_{1})$ and $(t_{2},X_{2},Y_{2},Z_{2},I_{2})\in[0,T]\times \mathbb{R}^{d}\times \mathbb{R}\times \mathbb{R}^{d}\times \mathbb{R}$, there exist a constant $K$, such that $f$ satisfies:
    \begin{equation*}
        \begin{aligned}
            &|f(t_{2},X_{2},Y_{2},Z_{2},I_{2})-f(t_{1},X_{1},Y_{1},Z_{1},I_{1})| \\
            &\leq K (|t_{2}-t_{1}|^{1/2}+|X_{2}-X_{1}|+|Y_{2}-Y_{1}|+|Z_{2}-Z_{1}|+|I_{2}-I_{1}|).
        \end{aligned}
    \end{equation*}
    As a consequence, we have
    \begin{equation*}
        \sup_{0\leq t \leq T}|f(t,0,0,0,0)|<\infty.
    \end{equation*}
    Meanwhile, $f$ is continuously differentiable ($C^1$) and its partial derivatives are bounded uniformly.

    \item \label{assumption:A4} 
    L\'{e}vy measure $\lambda(de)$ satisfies the finite second moment condition:
    $$\int_E (1\wedge |e|^2) \lambda(de) <\infty.$$ 

    \item \label{assumption:A5} 
    Measurable functions $\delta:E\mapsto\mathbb{R}$ and $g:\mathbb{R}^d\mapsto\mathbb{R}$ satisfy
    \begin{equation*}
        \begin{aligned}
            |\delta(e)|                       & \leq K(1\wedge|e|),   \\
            |g(x)-g(x^{\prime})| & \leq K|x-x^{\prime}|.
        \end{aligned}
    \end{equation*}

\end{enumerate}


The following lemmas outline key properties of square-integrable martingales, which will be useful for subsequent analysis. Here, $L^2_W(\mathbb{R}^d)$ denotes the space of square-integrable processes that are adapted to a Brownian motion $W$ and take values in $\mathbb{R}^d$. Similarly, $L^2_{\tilde{\mu}}(\mathbb{R})$ represents the analogous space for processes adapted to the compensated Poisson measure $\tilde{\mu}$, with values in $\mathbb{R}$.
\begin{lemma}[Martingale Representation Theorem, \cite{situTheoryStochasticDifferential2005}]\label{lemma:MRT} For any square integrable martingale $M$, there exists $(Z,U)\in L^2_W(\mathbb{R}^d) \times L^2_{\tilde{\mu}}(\mathbb{R})$, such that for $t\in [0,T]$,
    $$
    M_t = M_0 + \int_{0}^{T} Z^\top dW_s + \int_{0}^{t} \int_E U \tilde{\mu}(ds,de).
    $$
\end{lemma}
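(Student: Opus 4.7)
The overall plan is to embed the problem in the Hilbert space $\mathcal{M}^2_0$ of square-integrable martingales with $M_0 = 0$, equipped with the inner product $\langle M,N\rangle := \mathbb{E}[M_T N_T]$, and to show that the subspace of stochastic integrals against $W$ and $\tilde{\mu}$ is dense. Since $M - M_0$ is itself a centred square-integrable martingale, it suffices to prove that every $N \in \mathcal{M}^2_0$ admits a representation $N_t = \int_0^t Z_s^\top dW_s + \int_0^t\!\int_E U_s(e)\,\tilde{\mu}(ds,de)$, after which $M$ is recovered by adding $M_0$.

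First I would introduce the candidate subspace
$$
\mathcal{S} := \left\{ \int_0^{\cdot} Z_s^\top dW_s + \int_0^{\cdot}\!\int_E U_s(e)\,\tilde{\mu}(ds,de) : (Z,U) \in L^2_W(\mathbb{R}^d)\times L^2_{\tilde{\mu}}(\mathbb{R}) \right\},
$$
and verify that $\mathcal{S}$ is a closed linear subspace of $\mathcal{M}^2_0$ via the It\^o isometry, which yields
$$
\|N\|^2 = \mathbb{E}\!\int_0^T |Z_s|^2\, ds + \mathbb{E}\!\int_0^T\!\int_E |U_s(e)|^2\, \lambda(de)\, ds
$$
for $N \in \mathcal{S}$, so a Cauchy sequence of elements of $\mathcal{S}$ produces a Cauchy sequence of integrands. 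The lemma is then equivalent to $\mathcal{S}^\perp = \{0\}$ in $L^2(\Omega,\mathcal{F}_T,\mathbb{P})$, via the identification $N \mapsto N_T$.

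The key step---and the main obstacle---is establishing the predictable representation property itself, namely $\mathcal{S}^\perp = \{0\}$. The canonical route is to exhibit a total family in $L^2(\Omega,\mathcal{F}_T,\mathbb{P})$ whose elements are already terminal values of elements of $\mathcal{S}$. For deterministic $h \in L^2([0,T];\mathbb{R}^d)$ and bounded measurable $k:[0,T]\times E\to(-1,\infty)$ with $\int_0^T\!\int_E |k_s(e)|^2 \lambda(de)\,ds < \infty$, let $\xi$ be the unique solution of the Dol\'eans--Dade SDE
$$
d\xi_t = \xi_{t-}\!\left( h_t^\top dW_t + \int_E k_t(e)\,\tilde{\mu}(dt,de)\right),\qquad \xi_0 = 1.
$$
By construction $\xi - 1 \in \mathcal{S}$, and the standard argument shows that the linear span of $\{\xi_T\}$ over all such $(h,k)$ is dense in $L^2(\Omega,\mathcal{F}_T,\mathbb{P})$. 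Any $N \in \mathcal{S}^\perp$ is then orthogonal to a dense family, forcing $N_T = 0$ and hence $N \equiv 0$ by the martingale property.

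The delicate technical point is exactly the density claim: it relies on $\mathbb{F}$ being generated by the independent pair $(W,\mu)$ (as stipulated at the start of the set-up) and on assumption \ref{assumption:A4}, which guarantees $\int_E(1\wedge|e|^2)\lambda(de)<\infty$ so that the exponentials $\xi_T$ live in $L^2$. Once $\mathcal{S}^\perp = \{0\}$ is secured, an arbitrary $M \in \mathcal{M}^2$ is written as $M_0$ plus the orthogonal projection of $M_T - M_0$ onto $\mathcal{S}$, and this projection reads off the integrands $(Z,U)$; their uniqueness follows again from the It\^o isometry. Since the conclusion is classical, a fully detailed argument along these lines can be pulled from Situ \cite{situTheoryStochasticDifferential2005}; the role of the proposal here is simply to indicate which pieces would need to be assembled.
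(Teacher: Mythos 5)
The paper does not prove this lemma at all---it is quoted verbatim as a known result from Situ \cite{situTheoryStochasticDifferentia2005} (see the paper's citation), so there is no internal argument to compare against; your sketch is the standard proof of the predictable representation property for a filtration generated by the independent pair $(W,\mu)$---closedness of the space of stochastic integrals $\mathcal{S}$ under the It\^o isometry together with totality of the Dol\'eans--Dade exponentials, which forces $\mathcal{S}^\perp=\{0\}$---and it is sound in outline, with the genuinely delicate step (density of the exponential family in $L^2(\Omega,\mathcal{F}_T,\mathbb{P})$) correctly identified rather than glossed over. The only point worth flagging is that the upper limit $T$ in the Brownian integral of the paper's statement is evidently a typo for $t$, which your version silently corrects.
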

\begin{lemma}[Conditional It\^{o} isometry, \cite{situTheoryStochasticDifferential2005}]\label{lemma:ConditionalIsometry} Let $A^1,A^2 \in L^2_W(\mathbb{R}^d)$, $B^1,B^2\in L^2_{\tilde{\mu}}(\mathbb{R})$, then:
    $$
    \begin{aligned}
        \mathbb{E} \left\{ \int_{t_n}^{t_{n+1}} A^1 dW_s \int_{t_n}^{t_{n+1}} A^2 dW_s \right\} =& \mathbb{E} \left\{ \int_{t_n}^{t_{n+1}} A^1  A^2 ds \right\}, \\
        \mathbb{E} \left\{ \int_{t_n}^{t_{n+1}} A^1 dW_s \int_{t_n}^{t_{n+1}} \int_E B^1 \tilde{\mu}(ds,de) \right\} =& 0,\\
        \mathbb{E} \left\{ \int_{t_n}^{t_{n+1}} \int_E B^1 \tilde{\mu}(ds,de) \int_{t_n}^{t_{n+1}} \int_E B^2 \tilde{\mu}(ds,de) \right\} =& \mathbb{E} \left\{ \int_{t_n}^{t_{n+1}} \int_E B^1 B^2 \tilde{\mu}(ds,de) \right\}.\\
    \end{aligned}
    $$
\end{lemma}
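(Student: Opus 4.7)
The plan is to establish each of the three identities by first verifying them on a dense subclass of simple (elementary) predictable processes, and then extending by an $L^2$-continuity argument to general integrands in $L^2_W(\mathbb{R}^d)$ and $L^2_{\tilde\mu}(\mathbb{R})$. The lemma is called ``conditional'' because both factors share the same sub-interval $[t_n,t_{n+1}]$; a polarization argument will let me deduce the bilinear cross-product statements from their squared (``diagonal'') counterparts.

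For the first identity, I would start from the classical It\^{o} isometry
\[
\mathbb{E}\Bigl|\int_{t_n}^{t_{n+1}} A\,dW_s\Bigr|^2 = \mathbb{E}\int_{t_n}^{t_{n+1}} |A|^2\,ds,
\]
which is proved on piecewise constant predictable integrands using the independent-increment property of $W$ and the identity $\mathbb{E}[(\Delta W)^2]=\Delta t$, and then extended by $L^2$-density. Applying the polarization identity $ab = \tfrac{1}{4}[(a+b)^2 - (a-b)^2]$ to the two Brownian integrals yields the first stated formula. The third identity is handled in the same spirit: I first establish the analogous $\tilde\mu$-isometry on simple integrands using the fact that $\tilde\mu([t_n,t_{n+1}]\times A)$ has variance $(t_{n+1}-t_n)\lambda(A)$ for Borel sets $A$ of finite $\lambda$-measure, with independent increments over disjoint time intervals, and then polarize.

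For the second (cross) identity, the key observation is the mutual independence of $W$ and the Poisson random measure $\mu$ (hence of $\tilde\mu$). For simple integrands, both $\int_{t_n}^{t_{n+1}} A^1\,dW_s$ and $\int_{t_n}^{t_{n+1}}\!\int_E B^1\,\tilde\mu(ds,de)$ are mean-zero and built from independent primitive increments, so their product has zero expectation. Extension to general $A^1 \in L^2_W(\mathbb{R}^d)$ and $B^1 \in L^2_{\tilde\mu}(\mathbb{R})$ proceeds by taking simple-process approximants and passing to the limit via the Cauchy--Schwarz inequality together with the two It\^{o} isometries.

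The main technical point is the simultaneous $L^2$-approximation needed for the cross identity: one must verify that the expectation of the product is continuous along approximating sequences for both $A^1$ and $B^1$ simultaneously, which follows from Cauchy--Schwarz applied to the differences and the already-established isometries controlling each factor. Since these identities are entirely standard in the jump-diffusion literature (see \cite{situTheoryStochasticDifferential2005}), the proof ultimately reduces to invoking the classical constructions of the stochastic integrals against $W$ and $\tilde\mu$ together with the polarization identity, and the novelty here is purely one of restriction to the sub-interval $[t_n,t_{n+1}]$, which is immediate because the integrands can be taken to vanish outside this interval.
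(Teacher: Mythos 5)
The paper does not prove this lemma at all---it is quoted from Situ's book with only a citation---so there is no in-paper argument to compare against; your outline (simple predictable integrands, independence of increments, polarization, $L^2$-density, and Cauchy--Schwarz for the mixed term) is the standard textbook construction and is essentially what the cited reference does. Two remarks are worth making. First, the third identity as printed in the paper cannot be literally correct: the right-hand side is an integral against the compensated measure $\tilde{\mu}(ds,de)$ and therefore has zero expectation; the isometry your argument actually yields, and the one the paper needs, has $\nu(ds,de)=\lambda(de)\,ds$ (equivalently $\mu(ds,de)$ under the outer expectation) on the right. Second, your reading of ``conditional'' as referring to the shared sub-interval is not how the paper uses the lemma: in Step 1 of Theorem \ref{thm:Consistency of the FBSJNN} the identities are invoked after taking $\mathbb{E}_n\{\cdot\}=\mathbb{E}\{\cdot\mid\mathcal{F}_{t_n}\}$, so the version actually needed replaces $\mathbb{E}$ by $\mathbb{E}_n$. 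Your proof extends to that case with only minor changes---the simple-process computation goes through because the increments of $W$ and $\tilde{\mu}$ over $(t_n,t_{n+1}]$ are independent of $\mathcal{F}_{t_n}$ and the integrands are predictable---but as written your argument only establishes the unconditional statement, so you should say explicitly that the approximation and polarization steps are carried out under $\mathbb{E}_n$.
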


We also refer to the paper \cite{bouchardDiscretetimeApproximationDecoupled2008, castroDeepLearningSchemes2022} for the regularity properties of $Y$, $Z$ and $I$.
\begin{lemma}[$L^2$ regularity of $Y$, $Z$ and $I$, \cite{bouchardDiscretetimeApproximationDecoupled2008, castroDeepLearningSchemes2022}]\label{lemma:L2regularityItem} Under assumptions \ref{assumption:A1} - \ref{assumption:A5}, there exists a constant $K > 0$ such that,
    $$\epsilon^Y(\Delta t) \leq K\Delta t, \quad\epsilon^Z(\Delta t) \leq K\Delta t, \quad\text{and}\quad  \epsilon^I(\Delta t) \leq K\Delta t,$$
    where
    \begin{equation}
        \begin{aligned}
            \epsilon^Y(\Delta t) :=& \mathbb{E}\bigg\{\sum_{n=0}^{N-1}\int_{t_n}^{t_{n+1}} |Y_s - Y_{n}|^2 ds \bigg\},\\
            \epsilon^Z(\Delta t) :=& \mathbb{E}\bigg\{\sum_{n=0}^{N-1}\int_{t_n}^{t_{n+1}} |Z_s - \bar{Z}_{n}|^2 ds \bigg\}, &\bar{Z}_{n} :=& \frac{1}{\Delta t} \mathbb{E}_n \big\{ \int_{t_n}^{t_{n+1}}Z_s ds \big\},\\
            \epsilon^I(\Delta t) :=& \mathbb{E}\bigg\{\sum_{n=0}^{N-1}\int_{t_n}^{t_{n+1}} |I_s - \bar{I}_{n}|^2 ds \bigg\}, &\bar{I}_{n} :=& \frac{1}{\Delta t} \mathbb{E}_n \big\{ \int_{t_n}^{t_{n+1}}I_s ds \big\}.
        \end{aligned}
    \end{equation}
\end{lemma}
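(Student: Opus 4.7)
The plan is to prove the three bounds in sequence: first $\epsilon^Y$ via a direct It\^{o}-isometry argument on the BSDE dynamics, then $\epsilon^Z$ and $\epsilon^I$ together by combining the Martingale Representation Theorem with a Zhang-type path-regularity estimate transported from the forward process $X$.

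For $\epsilon^Y(\Delta t)$, I would start from the backward equation on $[t_n,s]$,
$$Y_s - Y_n = -\int_{t_n}^s f(r,X_r,Y_r,Z_r,I_r)\,dr + \int_{t_n}^s Z_r^\top dW_r + \int_{t_n}^s\!\!\int_E \hat{u}_r(e)\,\tilde{\mu}(dr,de).$$
Taking $\mathbb{E}|\cdot|^2$, applying Cauchy--Schwarz to the drift and Lemma \ref{lemma:ConditionalIsometry} to the two martingale terms, together with the Lipschitz/linear-growth bound on $f$ from \ref{assumption:A3} and standard a priori $L^2$-bounds on $(X,Y,Z,\hat{u})$ (guaranteed by Lemma \ref{lemma:existence_uniqueness}), would yield $\mathbb{E}|Y_s-Y_n|^2 \leq K(s-t_n)$. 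Integrating in $s$ over $[t_n,t_{n+1}]$ and summing over $n=0,\dots,N-1$ then gives $\epsilon^Y(\Delta t) \leq KT\Delta t$.

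For $\epsilon^Z$ and $\epsilon^I$, the key observation is that $\bar{Z}_n$ (resp.\ $\bar{I}_n$) is the $L^2(\Omega\times[t_n,t_{n+1}])$-orthogonal projection of $Z$ (resp.\ $I$) onto the subspace of $\mathcal{F}_{t_n}$-measurable constant-in-time processes, so by the best-approximation property
$$\epsilon^Z(\Delta t) \leq \sum_{n=0}^{N-1}\mathbb{E}\int_{t_n}^{t_{n+1}}|Z_s-Z_{t_n}|^2\,ds, \qquad \epsilon^I(\Delta t) \leq \sum_{n=0}^{N-1}\mathbb{E}\int_{t_n}^{t_{n+1}}|I_s-I_{t_n}|^2\,ds.$$
It therefore suffices to show that these right-hand sides are $O(\Delta t)$ --- an $L^2$ path-regularity estimate of Zhang type.

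The main obstacle is precisely this last step: $Z$ and $I$ are a priori only $L^2$-integrable and carry no pointwise time regularity, so no Gr\"{o}nwall argument applied directly to the backward equation can bound them. I would therefore follow the route of \cite{bouchardDiscretetimeApproximationDecoupled2008}: invoke Lemma \ref{lemma:MRT} to identify $Z$ and the jump integrand as the unique predictable representatives of the martingale part of $Y$, and then either (i) exploit Malliavin differentiability of the FBSDEJ to obtain an explicit stochastic representation of $(Z,I)$ whose time-regularity reduces to that of $X$, or (ii) use the Markov identity $Z_t=\sigma^\top(t,X_t)\nabla_x u(t,X_t)$ coming from the nonlinear Feynman--Kac formula and apply It\^{o}'s formula to $\nabla_x u(t,X_t)$, with an analogous representation for $I_t$. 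Either route transports the forward-process regularity $\mathbb{E}|X_s-X_{t_n}|^2 \leq K(s-t_n)$ (immediate from \ref{assumption:A1}, \ref{assumption:A2}, \ref{assumption:A4} via standard Euler-type estimates) to $(Z,I)$, yielding the required $K\Delta t$-bounds.
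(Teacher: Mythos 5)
The paper does not prove this lemma at all: it is imported verbatim from \cite{bouchardDiscretetimeApproximationDecoupled2008, castroDeepLearningSchemes2022}, so there is no in-paper proof to compare against. Your sketch follows essentially the same route as those cited references --- It\^{o} isometry plus the square-integrability of $f$ for $\epsilon^Y$, and the projection (best-approximation) property of $\bar{Z}_n,\bar{I}_n$ combined with a Zhang-type path-regularity theorem (proved there via Malliavin calculus / the Markovian representation $Z_t=\sigma^\top\nabla_x u(t,X_t)$) for $\epsilon^Z,\epsilon^I$ --- and you correctly flag that this last step is the genuinely hard one rather than claiming it follows from Gr\"onwall. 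The only point to tighten is in the $\epsilon^Y$ estimate: the pointwise bound $\mathbb{E}|Y_s-Y_n|^2\leq K(s-t_n)$ needs $\sup_t\mathbb{E}|Z_t|^2<\infty$ (available in the Markovian Lipschitz setting), whereas integrating in $s$ and summing over $n$ \emph{before} bounding lets you get by with only the global $L^2$ norms of $Z$ and $\hat{u}$.
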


Based on \ref{assumption:A5}, one can prove the square-integral property of $f$; See \cite{castroDeepLearningSchemes2022} for more details.
\begin{lemma}[Square-integral property of $f$, \cite{castroDeepLearningSchemes2022}]\label{lemma:fSquareIntegral} Under assumption \textnormal{\ref{assumption:A5}}, for the unique solution $(X_t, Y_t, Z_t, I_t)$ to the FBSDEJs \eqref{eqn:FBSDEJ}, there exists a constant $K > 0$ such that,
    $$\mathbb{E} \bigg\{\int_{0}^{T} f^2 (s, X_s, Y_s, Z_s, I_s) ds\bigg\} < K.$$
\end{lemma}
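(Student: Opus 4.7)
The plan is to use the Lipschitz continuity of $f$ from (A3) to reduce the square-integrability of $f(s,X_s,Y_s,Z_s,I_s)$ to standard $L^2$ moment estimates for the four solution components. Comparing to the reference point $(s,0,0,0,0)$ via (A3) and squaring with $(\sum_{i=0}^{4} a_i)^2 \leq 5\sum_{i=0}^4 a_i^2$ gives
\begin{equation*}
    |f(s,X_s,Y_s,Z_s,I_s)|^2 \leq 5\sup_{s\in[0,T]}|f(s,0,0,0,0)|^2 + 5K^2\big(|X_s|^2+|Y_s|^2+|Z_s|^2+|I_s|^2\big),
\end{equation*}
where the supremum is finite by the consequence clause of (A3). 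Integrating in $s$ and taking expectations reduces the problem to controlling $\mathbb{E}\int_0^T(|X_s|^2+|Y_s|^2+|Z_s|^2+|I_s|^2)\,ds$.

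The first three bounds are routine. A Gronwall argument on the FSDE \eqref{eqn:FSDE}, with the Poisson integral handled via It\^o isometry and the bound $|\beta|^2 \leq K^2(1\wedge|e|^2)$ from (A2) combined with (A4), yields $\sup_{s\leq T}\mathbb{E}|X_s|^2 < \infty$, hence $\mathbb{E}\int_0^T|X_s|^2\,ds < \infty$. Lemma \ref{lemma:existence_uniqueness} places $(Y,Z)$ in the natural $L^2$ solution space for the BSDEJ, so $\mathbb{E}\int_0^T(|Y_s|^2+|Z_s|^2)\,ds<\infty$ follows at once.

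The main obstacle is controlling $\mathbb{E}\int_0^T|I_s|^2\,ds$, since $I_s = \int_E\hat{u}_s(e)\,\lambda(de)$ is an integral against the raw L\'evy measure, not the compensated one, so It\^o isometry does not apply directly. The strategy is to exploit the Feynman--Kac identity $Y_s = u(s,X_s)$ together with Lipschitz regularity of $u$ (ensured by (A1)--(A5) via the viscosity theory underlying Lemma \ref{lemma:FBSDEJ_PIDE}) to produce the pointwise bound $|\hat{u}_s(e)| \leq L\,|\beta(s,X_s,e)| \leq LK(1\wedge|e|)$, and then pair this with (A4) through the identity $(1\wedge|e|)^2 = 1\wedge|e|^2$ in a weighted Cauchy--Schwarz argument to dominate $|I_s|^2$ by a deterministic constant proportional to $\int_E(1\wedge|e|^2)\lambda(de)$.

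Integrating the resulting deterministic bound over $[0,T]$ then closes the argument. The delicate aspect of this last step is that $\lambda$ need not be a finite measure on $E$, so one cannot naively bound $|\int_E\hat{u}_s\,d\lambda|^2$ by $\lambda(E)\int_E|\hat{u}_s|^2\,d\lambda$; the careful treatment that leverages the quadratic small-jump behavior inherited from the product $|\hat{u}_s(e)|\cdot|\hat{u}_s(e)|$ follows the argument in \cite{castroDeepLearningSchemes2022} and is the technical heart of the proof.
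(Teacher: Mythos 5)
The paper gives no proof of this lemma; it defers entirely to \cite{castroDeepLearningSchemes2022}, so your proposal must be measured against the argument there. Your overall reduction --- Lipschitz continuity of $f$ against the reference point $(s,0,0,0,0)$, then componentwise $L^2$ moment bounds --- is exactly the standard route, and the $X$, $Y$, $Z$ bounds are fine. The gap is in the $I$ term. From the pointwise bound $|\hat{u}_s(e)|\le LK(1\wedge|e|)$ the most you can extract is $|I_s|\le LK\int_E(1\wedge|e|)\,\lambda(de)$, and assumption \ref{assumption:A4} controls $\int_E(1\wedge|e|^2)\,\lambda(de)$, not $\int_E(1\wedge|e|)\,\lambda(de)$: for an infinite-activity L\'evy measure such as $\lambda(de)=|e|^{-2-\alpha}\,de$ with $1\le\alpha<2$ the latter is infinite while the former is finite. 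The identity $(1\wedge|e|)^2=1\wedge|e|^2$ is true but does not rescue this; any Cauchy--Schwarz splitting $\int_E|\hat{u}_s|\,d\lambda\le\big(\int_E|\hat{u}_s|^2w^{-1}\,d\lambda\big)^{1/2}\big(\int_E w\,d\lambda\big)^{1/2}$ leaves one factor comparable to $\lambda(E)$ or to $\int_E(1\wedge|e|)\,d\lambda$, both of which may be infinite. So the step ``dominate $|I_s|^2$ by a constant proportional to $\int_E(1\wedge|e|^2)\lambda(de)$'' does not go through as stated.

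The mechanism in \cite{castroDeepLearningSchemes2022} (following Bouchard--Elie) is structurally different: there the jump argument of the driver is $\Gamma_s=\int_E U_s(e)\,\delta(e)\,\lambda(de)$ with a weight $\delta$ satisfying $|\delta(e)|\le K(1\wedge|e|)$ --- which is precisely why assumption \ref{assumption:A5} introduces the otherwise-unused function $\delta$ --- and Cauchy--Schwarz pairs two \emph{distinct} factors, giving $|\Gamma_s|^2\le\|U_s\|^2_{L^2(\lambda)}\cdot K^2\int_E(1\wedge|e|^2)\,\lambda(de)$, where $\mathbb{E}\int_0^T\|U_s\|^2_{L^2(\lambda)}\,ds<\infty$ is supplied by the BSDEJ solution space of Lemma \ref{lemma:existence_uniqueness}, not by the pointwise Lipschitz bound on $u$. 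To salvage your argument with the paper's unweighted definition of $I_s$ you must additionally assume $\lambda(E)<\infty$ (true in all of the paper's numerical examples) or $\int_E(1\wedge|e|)\,\lambda(de)<\infty$; otherwise $I_s$ is not even obviously absolutely convergent. Apart from this one step, your skeleton is the right one.
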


The fundamental result of the series of works by Hornik, Stinchcombe, and White \cite{hornikMultilayerFeedforwardNetworks1989,stinchcombeUniversalApproximationUsing1989} establishes the approximate capabilities of neural networks. 

\begin{lemma}[Universal Approximation Theorem, \cite{hornikMultilayerFeedforwardNetworks1989, stinchcombeUniversalApproximationUsing1989}]\label{lemma:UAT} Let $\kappa$ be a non-constant activation function that is a $C^k$ function. Then, a neural network with a single hidden layer, a sufficient number of neurons, and the activation function $\kappa$ can approximate any continuous function and its derivatives up to order $k$ arbitrarily well on any compact subset of $\mathbb{R}^d$. Here, $d$ represents the input dimension of the neural network, while the output dimension can be any positive integer.
\end{lemma}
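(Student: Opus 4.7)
The plan is to prove the density of single-hidden-layer networks in $C^k(K)$ via a Hahn--Banach and Riesz-representation argument, adapted from the classical scheme of Cybenko and Hornik--Stinchcombe--White. Since the output dimension can be handled componentwise, I would first reduce to the scalar-output case, and let $\mathcal{N}_\kappa$ denote the linear span of functions of the form $x \mapsto \kappa(w^\top x + b)$ on a fixed compact $K \subset \mathbb{R}^d$. The goal is to show that $\mathcal{N}_\kappa$ is dense in $C^k(K)$ with respect to the norm $\|\phi\|_{C^k} := \sum_{|\alpha| \leq k} \sup_{x \in K} |D^\alpha \phi(x)|$.

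Next, I would argue by contradiction. Suppose that the closure of $\mathcal{N}_\kappa$ in $C^k(K)$ is a proper subspace. By Hahn--Banach there exists a nonzero continuous linear functional $L$ on $C^k(K)$ that vanishes on $\mathcal{N}_\kappa$. The dual of $C^k(K)$ admits a Riesz-type representation as a finite sum $L(\phi) = \sum_{|\alpha| \leq k} \int_K D^\alpha \phi \, d\mu_\alpha$ for signed Borel measures $\{\mu_\alpha\}$ supported on $K$. Plugging in $\phi_{w,b}(x) := \kappa(w^\top x + b)$ and using $D^\alpha \phi_{w,b}(x) = w^\alpha \kappa^{(|\alpha|)}(w^\top x + b)$, the annihilation condition $L(\phi_{w,b}) = 0$ becomes a single scalar integral identity parametrized by $(w,b) \in \mathbb{R}^d \times \mathbb{R}$.

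The hard part will be to conclude from this one-parameter family of identities that every $\mu_\alpha$ must vanish. Here I would exploit that $\kappa$ is non-constant and $C^k$: either by a Fourier-transform route (reducing the identity, after a change of variables in $w$, to the vanishing of the Fourier transform of an explicit combination of the $\mu_\alpha$ on a half-ray, and then invoking analyticity of $\hat{\mu}_\alpha$ to extend the vanishing everywhere), or by the original ``discriminatory property'' argument, where suitable finite differences of $\kappa$ against translates and dilations approximate indicators of half-spaces in the weak sense, forcing the total measure of every half-space to be zero. Both routes conclude $\mu_\alpha \equiv 0$ for every $\alpha$, contradicting $L \neq 0$.

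Finally, simultaneous approximation of derivatives up to order $k$ drops out of this setup for free: because $L$ tests all $D^\alpha \phi$ at once, density in the $C^k$-norm follows from the same contradiction, and the $C^k$-regularity of $\kappa$ guarantees that the derivatives of networks are themselves linear combinations of $\kappa^{(j)}$ composed with affine maps, so approximation in values transfers to approximation of derivatives on the compact set $K$. I expect the Fourier/discriminatory step to be the genuine obstacle, while the reductions and the Hahn--Banach framework are relatively routine.
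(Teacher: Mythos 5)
The paper does not actually prove this lemma: it is stated as a quoted result with citations to Hornik--Stinchcombe--White, so there is no internal proof to compare against. Judged on its own terms, your Hahn--Banach/Riesz sketch follows the classical Cybenko-style duality scheme, but it contains a genuine gap at exactly the step you flag as ``the hard part,'' and the gap is structural rather than technical. The representation $L(\phi)=\sum_{|\alpha|\le k}\int_K D^\alpha\phi\,d\mu_\alpha$ is \emph{not unique}: already for $d=k=1$, the nonzero measures $\mu_1=\mathbf{1}_{[0,1]}\,dx$ and $\mu_0=\delta_0-\delta_1$ satisfy $\int\phi\,d\mu_0+\int\phi'\,d\mu_1=\phi(0)-\phi(1)+\phi(1)-\phi(0)=0$ for every $\phi$. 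Hence ``every $\mu_\alpha\equiv0$'' is strictly stronger than $L=0$ and is in general false for a representation of the zero functional, so it cannot be the conclusion of your contradiction; what you must show is that $L$ itself annihilates a dense subclass of $C^k(K)$. Your two proposed routes do not deliver this as stated: the discriminatory/half-space argument does not survive differentiation (approximants of half-space indicators have derivatives that blow up, so they cannot be tested against the coupled identity), and the Fourier route only yields the vanishing of the single distribution $\sum_\alpha(-1)^{|\alpha|}D^\alpha\mu_\alpha$, which, as the example shows, does not force the individual $\mu_\alpha$ to vanish.

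There is a second, independent problem: the hypothesis ``non-constant and $C^k$'' cannot suffice for any proof, because $\kappa(t)=t$ (or any polynomial) is non-constant and smooth, yet the span of $x\mapsto\kappa(w^\top x+b)$ then consists of polynomials of bounded degree and is not dense even in $C(K)$. The statement as given is therefore missing the non-polynomiality assumption present in the cited literature. The standard correct argument (Hornik--Stinchcombe--White 1990; Pinkus 1999, Theorem 4.1) is constructive rather than dual: difference quotients in the scale parameter $\lambda$ of $\kappa(\lambda w^\top x+b)$ converge in the $C^k$ topology to $(w^\top x)^j\kappa^{(j)}(b)$; choosing $b$ with $\kappa^{(j)}(b)\neq0$ (possible precisely because $\kappa$ is not a polynomial) places every ridge monomial, hence every polynomial, in the $C^k$-closure of the network class, and one concludes from the density of polynomials in $C^k(K)$. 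If you wish to retain the duality framework, replace the goal ``$\mu_\alpha=0$ for all $\alpha$'' by ``$L$ vanishes on all polynomials (or trigonometric polynomials),'' and add the hypothesis that $\kappa$ is not a polynomial.
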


\subsection{Consistency of the FBSJNN} Following the paper \cite{hureDeepBackwardSchemes2020, castroDeepLearningSchemes2022, bouchardDiscretetimeApproximationDecoupled2008}, we are going to derive the consistency of the proposed FBSJNN framework.

\begin{theorem}[Consistency of the FBSJNN]\label{thm:Consistency of the FBSJNN} Under assumptions \ref{assumption:A1} - \ref{assumption:A5}, the FBSJNN scheme converges to the solution of the FBSDEJ \eqref{eqn:FBSDEJ} and there exists a constant $K > 0$, independent of $\mathbb{T}$, such that
    \begin{equation}\label{eqn:Consistency of the FBSJNN}
        \begin{aligned}
            \max_{n} \big(\mathbb{E} \{|Y_n - \mathcal{Y}_n(\theta^*)|^2\} +& \mathbb{E} \{|Z_n - \mathcal{Z}_n(\theta^*)|^2\} + \mathbb{E} \big\{| I_n-\mathcal{I}_n(\theta^*)|^2\big\}\big)\\
            &\leq K \big(\Delta t +  \epsilon^Y + \epsilon^Z+ \epsilon^I + (N+1)\epsilon^{\mathcal{Y}} \big),
        \end{aligned}
    \end{equation}
where $\epsilon^{\mathcal{Y}}$ is the neural network approximation error
\begin{equation*}
    \epsilon^{\mathcal{Y}} := \inf_{m\in\mathbb{Z}^*} \max_n \mathbb{E}\{|Y_n-\mathcal{Y}_n|^2\},
\end{equation*}
and $m$ denotes the number of the neurons in a single hidden layer neural network.
\end{theorem}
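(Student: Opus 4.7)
The plan is to adapt the consistency-analysis template of Hure--Pham and Castro et al.\ to the FBSJNN setting, in which a single network $\mathcal{N}^\theta$ produces $\mathcal{Y}_n$, $\mathcal{Z}_n$, and $\mathcal{I}_n$ simultaneously through automatic differentiation and a Taylor-simplified integral. The argument splits into three stages: (i) use Lemma \ref{lemma:UAT} to upper-bound $\mathcal{L}(\theta^*)$ by $K\bigl(\epsilon^{\mathcal{Y}} + (\Delta t + \epsilon^Y + \epsilon^Z + \epsilon^I)/(N+1)\bigr)$; (ii) use the BSDE recursion with Lemmas \ref{lemma:MRT} and \ref{lemma:ConditionalIsometry} to derive a matching lower bound for $\mathcal{L}(\theta^*)$ in terms of the three squared errors we wish to control; (iii) close the loop by a discrete backward Grönwall iteration. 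To execute (i), I would fix $m\in\mathbb{Z}^*$, let $\tilde\theta$ realize the infimum in $\epsilon^{\mathcal{Y}}$, and invoke the $C^1$ version of Lemma \ref{lemma:UAT} so that $\tilde\theta$ simultaneously approximates $u(t_n,\cdot)$ and $\nabla_x u(t_n,\cdot)$; consequently $\mathcal{Z}_n(\tilde\theta)$ and $\mathcal{I}_n(\tilde\theta)$ approximate $Z_n$ and $I_n$. Plugging $\tilde\theta$ into $\mathcal{L}$, expanding $Y_{n+1}-Y_n$ via the true BSDE on $[t_n,t_{n+1}]$, and using the Lipschitz/Hölder assumptions \ref{assumption:A1}--\ref{assumption:A3} with Lemmas \ref{lemma:L2regularityItem}--\ref{lemma:fSquareIntegral} then bounds each per-step summand by $\epsilon^{\mathcal{Y}}$ plus sub-interval regularity residuals; the optimality $\mathcal{L}(\theta^*)\le \mathcal{L}(\tilde\theta)$ completes the upper bound.

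For stage (ii), I would decompose
\begin{equation*}
\mathcal{Y}_{n+1} - \mathcal{T}(t_n,\mathcal{X}_n,\mathcal{Y}_n,\mathcal{Z}_n,\mathcal{I}_n) = (\mathcal{Y}_{n+1}-Y_{n+1}) + \bigl(Y_{n+1} - \mathcal{T}(t_n,X_n,Y_n,Z_n,I_n)\bigr) + \Delta\mathcal{T}_n,
\end{equation*}
where the middle bracket equals a stochastic-integral (martingale) increment plus an $O(\Delta t)$ Euler drift residual and $\Delta\mathcal{T}_n$ is controlled by the Lipschitz constant of $f$. Taking conditional expectation $\mathbb{E}_n$ and invoking the orthogonality relations of Lemma \ref{lemma:ConditionalIsometry} makes all Brownian/jump cross-terms vanish; the squared martingale terms then isolate $c\Delta t\bigl(\mathbb{E}|Z_n-\mathcal{Z}_n|^2 + \mathbb{E}|I_n-\mathcal{I}_n|^2\bigr)$ with a strictly positive coefficient. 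After Young's inequality this produces a per-step recursion of the form
\begin{equation*}
\mathbb{E}|Y_n-\mathcal{Y}_n|^2 + c\Delta t\bigl(\mathbb{E}|Z_n-\mathcal{Z}_n|^2 + \mathbb{E}|I_n-\mathcal{I}_n|^2\bigr) \le (1+C\Delta t)\,\mathbb{E}|Y_{n+1}-\mathcal{Y}_{n+1}|^2 + C\,\mathcal{L}_n(\theta^*) + \mathrm{reg}_n,
\end{equation*}
where $\mathcal{L}_n(\theta^*)$ denotes the $n$-th summand of the loss and $\mathrm{reg}_n$ sums to $K(\epsilon^Y+\epsilon^Z+\epsilon^I+\Delta t)$ via Lemma \ref{lemma:L2regularityItem}. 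A discrete backward Grönwall iteration starting from the terminal bound $\mathbb{E}|\mathcal{Y}_N-g(\mathcal{X}_N)|^2 \le (N+1)\mathcal{L}(\theta^*)$ (augmented by $\mathbb{E}|X_N-\mathcal{X}_N|^2 = O(\Delta t)$ from standard Euler--Maruyama analysis and the Lipschitz property of $g$ in \ref{assumption:A5}), combined with the upper bound from stage (i) and the uniform amplification $(1+C\Delta t)^N \le e^{CT}$, yields the claimed estimate after taking $\max_n$.

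The main obstacle is the coupling through $\mathcal{Z}_n$ and $\mathcal{I}_n$: because these are \emph{determined} by $\mathcal{N}^\theta$ rather than being independently learned as in DBDP, the approximating $\tilde\theta$ used in stage (i) must simultaneously control gradients and Taylor-integrated jump terms of the network. This is exactly what the $C^k$ version of Lemma \ref{lemma:UAT} provides, and it explains why the bound depends on the single quantity $\epsilon^{\mathcal{Y}}$ rather than separate approximation errors for $Z$ and $I$. A secondary technical point is that the Taylor residual $R(t_n,X_n)$ in \eqref{Taylor-integro} must be shown to contribute only $O(\Delta t)$ after integration against $\lambda(de)$, which follows from $|\beta|\le K(1\wedge|e|)$ combined with the finite-second-moment condition \ref{assumption:A4}; this contribution is what generates the isolated $\Delta t$ term in the final bound.
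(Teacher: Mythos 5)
Your three-stage plan is the same template the paper follows (and both trace back to Hur\'e--Pham and Castro et al.): bound the optimal loss from above by evaluating $\mathcal{L}$ at a UAT-provided network $\tilde\theta$ and invoking $\mathcal{L}(\theta^*)\le\mathcal{L}(\tilde\theta)$, bound the loss from below by the squared errors in $Y$, $Z$, $I$, and close with a discrete Gr\"onwall iteration. Your identification of the two FBSJNN-specific obstacles --- that a single network must control $u$, $\nabla_x u$, and the Taylor-reduced integral simultaneously (handled by the $C^k$ version of Lemma \ref{lemma:UAT}), and that the Taylor remainder $R(t_n,X_n)$ contributes $O(\Delta t)$ under \ref{assumption:A2} and \ref{assumption:A4} --- matches the paper's Step 4 exactly.

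The one place your sketch would break as written is the stage (ii) decomposition. You center the one-step residual at the true solution, writing $\mathcal{Y}_{n+1}-\mathcal{T}(\cdot)=(\mathcal{Y}_{n+1}-Y_{n+1})+(Y_{n+1}-\mathcal{T}(t_n,X_n,Y_n,Z_n,I_n))+\Delta\mathcal{T}_n$, and claim that conditioning plus Lemma \ref{lemma:ConditionalIsometry} kills all cross terms. It does not: $\mathcal{Y}_{n+1}-Y_{n+1}$ is $\mathcal{F}_{t_{n+1}}$-measurable but is not a stochastic integral over $(t_n,t_{n+1}]$ with zero conditional mean, so its cross term with $\int_{t_n}^{t_{n+1}}(Z_s-\mathcal{Z}_n)^\top dW_s$ survives, and the ``strictly positive coefficient'' in front of $\Delta t\,\mathbb{E}|Z_n-\mathcal{Z}_n|^2+\Delta t\,\mathbb{E}|I_n-\mathcal{I}_n|^2$ does not fall out directly in terms of the true $Z_n,I_n$. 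The paper repairs exactly this by introducing the auxiliary process $\hat Y_n=\mathbb{E}_n\{\mathcal{Y}_{n+1}\}+f(t_n,\mathcal{X}_n,\hat Y_n,\hat Z_n,\hat I_n)\Delta t$, $\hat Z_n=\frac{1}{\Delta t}\mathbb{E}_n\{\mathcal{Y}_{n+1}\Delta W_n\}$, $\hat I_n=\frac{1}{\Delta t}\mathbb{E}_n\{\mathcal{Y}_{n+1}\int_E\tilde\mu((t_n,t_{n+1}],de)\}$, applying Lemma \ref{lemma:MRT} to $\mathcal{Y}_{n+1}-\mathbb{E}_n\{\mathcal{Y}_{n+1}\}$ to obtain $(Z^*,U^*)$, and thereby getting an \emph{exact} orthogonal splitting of each loss summand into a part $\mathcal{L}^{(1)}_n$ involving $|\hat Y_n-\mathcal{Y}_n|$, $|\hat Z_n-\mathcal{Z}_n|$, $|\hat I_n-\mathcal{I}_n|$ and a remainder $\mathcal{L}^{(2)}_n$ that does not affect the minimization; the passage from the hatted quantities to $Y_n,Z_n,I_n$ then costs exactly the regularity terms $\epsilon^Y,\epsilon^Z,\epsilon^I$ of Lemma \ref{lemma:L2regularityItem} plus $|Y_{n+1}-\mathcal{Y}_{n+1}|^2$ contributions absorbed by Gr\"onwall. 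Since you already list Lemma \ref{lemma:MRT} among your tools, this is a repairable mis-centering rather than a wrong approach, but the decomposition must be re-anchored at $\mathbb{E}_n\{\mathcal{Y}_{n+1}\}$ for the orthogonality argument you rely on to be valid.
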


\begin{proof} To prove the theorem, we first define the following $\mathcal{F}$-adapted discrete auxiliary process
\begin{equation}\label{eq3.3}
    \left\{
    \begin{aligned}
        \hat{Y}_n &= \mathbb{E}_n \{\mathcal{Y}_{n+1} \} + f(t_n, \mathcal{X}_n, \hat{Y}_n, \hat{Z}_n, \hat{I}_n) \Delta t,\\
        \hat{Z}_n &=  \frac{1}{\Delta t} \mathbb{E}_n \{\mathcal{Y}_{n+1} \Delta W_n\},\\
        \hat{I}_n &=  \frac{1}{\Delta t} \mathbb{E}_n \left\{\mathcal{Y}_{n+1} \int_E \tilde{\mu}((t_n,t_{n+1}],de) \right\}.\\
    \end{aligned}
    \right.
\end{equation}
where $\mathbb{E}_n\{\cdot\} := \mathbb{E}\{\cdot| \mathcal{F}_{t_n}\}$ is the conditional expectation given $\mathcal{F}_{t_n}$.

To avoid confusion, we will unify five sets of notation here. With index $t$ refers to time variable and $n$ refers to the index of the time discretization.

1. $(X, Y, Z, I)$ denote the exact solution;

2. $(\mathcal{X}, \mathcal{Y}, \mathcal{Z}, \mathcal{I})$ denote the approximate solution;

3. $(\hat{Y}, \hat{Z}, \hat{I})$ denote the auxiliary process;

4. $(\bar{Z}, \bar{I})$ represent the regularity of $Z$ and $I$, see Lemma \ref{lemma:L2regularityItem} for details;

5. $(Z^*, U^*)$ will be introduced later when applying Lemma \ref{lemma:MRT}.

The proof of the theorem is structured into four distinct steps. 
In the first step, we estimate the local error between the exact solution and the auxiliary process, given by $\mathbb{E} \{|Y_n - \hat{Y}_n|^2 \}$. 
Next, in the second step, we derive an upper bound for the error between the exact solution and the approximate solution, expressed as $\max_{n} \mathbb{E} \{|Y_n - \mathcal{Y}_n|^2\}$, using Gronwall's lemma. 
In the third step, we analyze the optimization error introduced by the structure of the loss function, which leads to different optimization parameters. We then estimate the error in the numerical solution of the optimal parameter, $\max_{n} \mathbb{E} \{|Y_n - \mathcal{Y}_n(\theta^*)|^2\}$.
Finally, in the fourth step, we introduce the neural network approximation error and deduce the result of the theorem.

{\bf Step 1:}
Reviewing the expression of BSDE in  \eqref{eqn:FBSDEJ-discrete-form}, by taking the conditional expectation $\mathbb{E}_n\{\cdot\}$ on both sides of the equation, we obtain:
\begin{equation*}
    \mathbb{E}_n \{Y_{n+1} \} = Y_n - \mathbb{E}_n \left\{ \int_{t_n}^{t_{n+1}}f(s, X_s, Y_s, Z_s, I_s) ds \right\}.
\end{equation*}
Subtracting the above equation from the first equation in (\ref{eq3.3}) yields
\begin{equation*}
    \begin{aligned}
        Y_n - \hat{Y}_n =& \mathbb{E}_n \{Y_{n+1} - \mathcal{Y}_{n+1}\} \\
        &+ \mathbb{E}_n \bigg\{ \int_{t_n}^{t_{n+1}} \bigg(f(s, X_s, Y_s, Z_s, I_s) -f(t_n, \mathcal{X}_n, \hat{Y}_n, \hat{Z}_n, \hat{I}_n)\bigg) ds \bigg\}.
    \end{aligned}
\end{equation*}
Using Young inequality and Jensen inequality, we have
\begin{equation}\label{eqn:proof_y_n_hat_y_n}
    \begin{aligned}
        |Y_n - \hat{Y}_n|^2 \leq & (1 + K \Delta t) |\mathbb{E}_n \{Y_{n+1} - \mathcal{Y}_{n+1}\}|^2 +\left(1 + \frac{1}{K \Delta t}\right)\cdot\\
        & \left|\mathbb{E}_n \bigg\{ \int_{t_n}^{t_{n+1}} \bigg(f(s, X_s, Y_s, Z_s, I_s) -f(t_n, \mathcal{X}_n, \hat{Y}_n, \hat{Z}_n, \hat{I}_n)\bigg) ds \bigg\}\right|^2\\
        \leq & (1 + K \Delta t) \mathbb{E}_n \{|Y_{n+1} - \mathcal{Y}_{n+1}|^2 \}+\left(1 + \frac{1}{K \Delta t}\right)\cdot\\
        & \;\;\mathbb{E}_n \bigg\{ \left|\int_{t_n}^{t_{n+1}} \bigg(f(s, X_s, Y_s, Z_s, I_s) -f(t_n, \mathcal{X}_n, \hat{Y}_n, \hat{Z}_n, \hat{I}_n)\bigg) ds \right|^2\bigg\}.\\
    \end{aligned}
\end{equation}
For the second term in right hand side, applying Cauchy-Schwarz inequality and Lipschitz condition on $f$, we have
\begin{equation}
    \label{eqn:Lipschitz_f}
    \begin{aligned}
        &\mathbb{E}\bigg\{\mathbb{E}_n \bigg\{ \left|\int_{t_n}^{t_{n+1}} \bigg(f(s, X_s, Y_s, Z_s, I_s) -f(t_n, \mathcal{X}_n, \hat{Y}_n, \hat{Z}_n, \hat{I}_n)\bigg) ds \right|^2\bigg\}\bigg\} \\
        &\leq \mathbb{E} \bigg\{ \Delta t \int_{t_n}^{t_{n+1}} \bigg(f(s, X_s, Y_s, Z_s, I_s) -f(t_n, \mathcal{X}_n, \hat{Y}_n, \hat{Z}_n, \hat{I}_n)\bigg)^2 ds \bigg\}\\
        &\leq 5 \Delta t K^2 \mathbb{E} \bigg\{ \int_{t_n}^{t_{n+1}} \big( |s-t_n|+ |X_s-\mathcal{X}_n|^2 + |Y_s-\hat{Y}_n|^2 +|Z_s-\hat{Z}_n|^2 +|I_s-\hat{I}_n|^2 \big) ds \bigg\} .
    \end{aligned}
\end{equation}

Note that all the terms on the right-hand side of \eqref{eqn:Lipschitz_f} can be estimated as follows.
For the first term, it is easy to get
\begin{equation}\label{eqn:step1term1}
    \mathbb{E}\left\{\int_{t_n}^{t_{n+1}} |s-t_n| ds\right\} =( \frac{1}{2}s^2 -t_n s) |_{t_n}^{t_{n+1}} = \frac{1}{2} \Delta t^2.
\end{equation}
For the second term, according Bouchard and Elie (2008) \cite{bouchardDiscretetimeApproximationDecoupled2008}, we can obtain
\begin{equation}
    \mathbb{E}\left\{\int_{t_n}^{t_{n+1}} |X_s-\mathcal{X}_n|^2 ds \right\}\leq \Delta t\mathbb{E}\left\{\sup_{t_n\leq s\leq t_{n+1}}|X_s-\mathcal{X}_n|^2 \right\} \leq K \Delta t^2.
\end{equation}
The third term can be bounded by
\begin{equation}
    \begin{aligned}
        \mathbb{E}\left\{\int_{t_n}^{t_{n+1}} |Y_s-\hat{Y}_n|^2 ds \right\}\leq& \mathbb{E}\left\{\int_{t_n}^{t_{n+1}}( 2|Y_s - Y_n|^2+ 2|Y_n-\hat{Y}_n|^2)ds\right\}\\
        \leq &2 \mathbb{E}\left\{ \int_{t_n}^{t_{n+1}} |Y_s-Y_n|^2 ds\right\} + 2\Delta t \mathbb{E}\left\{|Y_n - \hat{Y}_n|^2\right\}.
    \end{aligned}
\end{equation}
To estimate the fourth term, we employ
$$
\begin{aligned}
    \mathbb{E}\left\{\int_{t_n}^{t_{n+1}} |Z_s-\hat{Z}_n|^2 ds\right\} =& \mathbb{E}\left\{\int_{t_n}^{t_{n+1}} |Z_s-\bar{Z}_n + \bar{Z}_n -\hat{Z}_n|^2 ds\right\}\\
    = & \mathbb{E}\left\{\int_{t_n}^{t_{n+1}} |Z_s-\bar{Z}_n|^2 ds\right\}
        + \Delta t \mathbb{E}\left\{|\bar{Z}_n -\hat{Z}_n|^2\right\}.
\end{aligned}
$$
Recalling the expression of BSDEs,
$$
Y_{n+1} = Y_{n} - \int_{t_n}^{t_{n+1}} \bigg(f_s ds -   Z_s^\top dW_s -  \int_{E} \hat{u}_s(e) \tilde{\mu}(ds,de)\bigg),
$$
where $f_s := f(s, X_s, Y_s, Z_s, I_s)$, multiplying both sides of the equation by $\Delta W_n$ and taking the conditional expectation $\mathbb{E}_n$, we then get
$$
\begin{aligned}
    \mathbb{E}_n\left\{\Delta W_n Y_{n+1}\right\} =& - \mathbb{E}_n\left\{\Delta W_n \int_{t_n}^{t_{n+1}} f_s ds\right\}+ \mathbb{E}_n\left\{\Delta W_n \int_{t_n}^{t_{n+1}} Z_s^\top dW_s\right\} \\
    &  + \mathbb{E}_n\left\{\Delta W_n\int_{t_n}^{t_{n+1}}\int_{E} \hat{u}_s(e) \tilde{\mu}(ds,de)\right\}.
\end{aligned}
$$
Using conditional It\^{o} isometry Lemma \ref{lemma:ConditionalIsometry} leads to
$$
\Delta t \bar{Z}_n = \mathbb{E}_n\{\Delta W_n Y_{n+1}\}+ \mathbb{E}_n\left\{\Delta W_n \int_{t_n}^{t_{n+1}} f_s ds\right\}.
$$
Employing the definition of $\hat{Z}_n$, taking it into account that $\mathbb{E}_n\{\Delta W_n \mathbb{E}_n\{Y_{n+1}-\mathcal{Y}_{n+1}\}\} =0$, we obtain
$$
\begin{aligned}
    \Delta t (\bar{Z}_n - \hat{Z}_n) = & \mathbb{E}_n\left\{\Delta W_n (Y_{n+1}-\mathcal{Y}_{n+1})\right\}+ \mathbb{E}_n\left\{\Delta W_n \int_{t_n}^{t_{n+1}} f_sds\right\}\\
    = & \mathbb{E}_n\left\{\Delta W_n\big( Y_{n+1}-\mathcal{Y}_{n+1} - \mathbb{E}_n\{Y_{n+1}-\mathcal{Y}_{n+1}\}\big)\right\}\\
    & + \mathbb{E}_n\left\{\Delta W_n \int_{t_n}^{t_{n+1}} f_s ds\right\}. \\
\end{aligned}
$$
Using Cauchy-Schwarz inequality, we have
$$
\begin{aligned}
    &\Delta t^2 |\bar{Z}_n - \hat{Z}_n|^2 \\
    & \leq 2d \mathbb{E}_n^2\bigg\{\Delta W_n \big( Y_{n+1}-\mathcal{Y}_{n+1} - \mathbb{E}_n\{Y_{n+1}-\mathcal{Y}_{n+1}\}\big)\bigg\} + 2d\mathbb{E}_n^2\bigg\{\Delta W_n \int_{t_n}^{t_{n+1}} f_s ds\bigg\} \\
    & \leq  2d\Delta t \mathbb{E}_n\bigg\{\big((Y_{n+1}-\mathcal{Y}_{n+1}) - \mathbb{E}_n\{Y_{n+1}-\mathcal{Y}_{n+1}\}\big)^2 + \big(\int_{t_n}^{t_{n+1}} f_s ds\big)^2\bigg\}\\
    & \leq  2d\Delta t \mathbb{E}_n\bigg\{(Y_{n+1}-\mathcal{Y}_{n+1})^2 + \mathbb{E}_n^2\{Y_{n+1}-\mathcal{Y}_{n+1}\} \\
    & \qquad\qquad\qquad-2(Y_{n+1}-\mathcal{Y}_{n+1})\mathbb{E}_n\{Y_{n+1}-\mathcal{Y}_{n+1}\}+\Delta t \int_{t_n}^{t_{n+1}} f^2_s ds\bigg\}\\
    &\leq  2d\Delta t \mathbb{E}_n\bigg\{(Y_{n+1}-\mathcal{Y}_{n+1})^2 - \mathbb{E}_n^2\{Y_{n+1} - \mathcal{Y}_{n+1}\} + \Delta t \int_{t_n}^{t_{n+1}} f^2_s ds\bigg\}.\\
\end{aligned}
$$
Taking the expectation $\mathbb{E}$ both side, we get the estimate for the fourth term
\begin{equation}
    \begin{aligned}
        &\mathbb{E}\left\{\int_{t_n}^{t_{n+1}} |Z_s-\hat{Z}_n|^2 ds\right\} \leq  \mathbb{E} \left\{\int_{t_n}^{t_{n+1}} |Z_s-\bar{Z}_n|^2 ds\right\}\\
        & \qquad + 2d \mathbb{E} \bigg\{ (Y_{n+1} - \mathcal{Y}_{n+1})^2 - \mathbb{E}_n^2\{Y_{n+1} - \mathcal{Y}_{n+1}\} + \Delta t\int_{t_n}^{t_{n+1}} f^2_s ds\bigg\} .\\
    \end{aligned}
\end{equation}
For the fifth term, we have similar calculation as the fourth term. First, we have
$$
\mathbb{E}\left\{\int_{t_n}^{t_{n+1}} |I_s-\hat{I}_n|^2 ds\right\}
    =  \mathbb{E}\left\{\int_{t_n}^{t_{n+1}} |I_s-\bar{I}_n|^2 ds\right\}
        + \Delta t \mathbb{E}\left\{|\bar{I}_n -\hat{I}_n|^2\right\}.
$$
Multiplying both sides of BSDEs in \eqref{eqn:discret-FBSDEJ} by $\int_E \tilde{\mu}((t_n, t_{n+1}],de)$, taking the conditional expectation $\mathbb{E}_n$, and using conditional It\^{o} isometry Lemma \ref{lemma:ConditionalIsometry}, we obtain
$$
\Delta t\bar{I}_n = \mathbb{E}_n\left\{\int_E \tilde{\mu}((t_n, t_{n+1}],de) Y_{n+1}\right\}+ \mathbb{E}_n\left\{\int_E \tilde{\mu}((t_n, t_{n+1}],de) \int_{t_n}^{t_{n+1}} f_s ds\right\}.
$$
In view of the definition of $\hat{I}_n$, there have
$$
\begin{aligned}
    &\Delta t^2|\bar{I}_n - \hat{I}_n|^2 \\
    & \leq 2 \mathbb{E}_n^2\bigg\{\int_E \tilde{\mu}((t_n, t_{n+1}],de) \big( Y_{n+1}-\mathcal{Y}_{n+1} - \mathbb{E}_n\{Y_{n+1}-\mathcal{Y}_{n+1}\}\big)\bigg\} \\
    & \quad+ 2\mathbb{E}_n^2\bigg\{\int_E \tilde{\mu}((t_n, t_{n+1}],de) \int_{t_n}^{t_{n+1}} f_s ds\bigg\} \\
    & \leq  2\big(\Delta t\int_E \lambda (de)\big)^2 \mathbb{E}_n\bigg\{\big((Y_{n+1}-\mathcal{Y}_{n+1}) - \mathbb{E}_n\{Y_{n+1}-\mathcal{Y}_{n+1}\}\big)^2 + \big(\int_{t_n}^{t_{n+1}} f_s ds\big)^2\bigg\}\\
    &\leq  2\big(\Delta t\int_E \lambda (de)\big)^2 \mathbb{E}_n\bigg\{(Y_{n+1}-\mathcal{Y}_{n+1})^2 - \mathbb{E}_n^2\{Y_{n+1} - \mathcal{Y}_{n+1}\} + \Delta t \int_{t_n}^{t_{n+1}} f^2_s ds\bigg\}.\\
\end{aligned}
$$
Taking the expectation $\mathbb{E}$ both sides leads to the estimate for the fifth term
\begin{equation}\label{eqn:step1term5}
    \begin{aligned}
        &\mathbb{E}\int_{t_n}^{t_{n+1}} |I_s-\hat{I}_n|^2 ds \leq  \mathbb{E} \int_{t_n}^{t_{n+1}} |I_s-\bar{I}_n|^2 ds\\
        & \; + 2\big(\int_E \lambda (de)\big)^2\Delta t \mathbb{E} \bigg\{ (Y_{n+1} - \mathcal{Y}_{n+1})^2 - \mathbb{E}_n^2\{Y_{n+1} - \mathcal{Y}_{n+1}\} + \Delta t\int_{t_n}^{t_{n+1}} f^2_s ds\bigg\} .\\
    \end{aligned}
\end{equation}

Now, by taking the expectation $\mathbb{E}\{\cdot\}$ on both sides of the inequality \eqref{eqn:proof_y_n_hat_y_n}, and applying \eqref{eqn:Lipschitz_f}–\eqref{eqn:step1term5}, we can select an appropriate constant $K$, leading to the following estimate
\begin{equation}
    \begin{aligned}\label{eqn:estimate2_y_n_hat_y_n}
        &\mathbb{E} \left\{|Y_n - \hat{Y}_n|^2 \right\}\\
        &\leq K \bigg(\Delta t ^2  + \mathbb{E} \left\{\int_{t_n}^{t_{n+1}} |Y_s-Y_n|^2 ds\right\} + (1+\Delta t)  \mathbb{E}\left\{|Y_{n+1} - \mathcal{Y}_{n+1}|^2\right\}\\ 
        &\qquad + \mathbb{E} \left\{\int_{t_n}^{t_{n+1}} |Z_s-\bar{Z}_n|^2 ds \right\}+ \mathbb{E} \left\{\int_{t_n}^{t_{n+1}} |I_s-\bar{I}_n|^2 ds\right\} + \Delta t\mathbb{E}\left\{ \int_{t_n}^{t_{n+1}} f^2_s ds\right\}\bigg).
    \end{aligned}
\end{equation}
Summing $\mathbb{E} \{|Y_n - \hat{Y}_n|^2 \}$ from $0$ to $N-1$, and applying Lemmas \ref{lemma:L2regularityItem} and \ref{lemma:fSquareIntegral}, we obtain
\begin{equation}
    \begin{aligned}\label{eqn:estimate_y_n_hat_y_n_sum_condition}
        &\sum_{n=0}^{N-1}\mathbb{E} \left\{|Y_n - \hat{Y}_n|^2 \right\}\leq K \bigg(\Delta t + \epsilon^Y+ \epsilon^Z  + \epsilon^I  + (1+\Delta t)  \sum_{n=1}^{N}\mathbb{E}\left\{|Y_{n} - \mathcal{Y}_{n}|^2\right\}\bigg).\\
    \end{aligned}
\end{equation}

{\bf Step 2:} In this step. we aim to estimate $\mathbb{E}\{|Y_n - \mathcal{Y}_n|^2\}$. Using Young inequality, it is obvious that for any $\gamma\in (0,1)$,
\begin{equation*}
    \begin{aligned}
        \mathbb{E}\left\{ |Y_n - \hat{Y}_n|^2\right\} =& \mathbb{E} \left\{ |Y_n -\mathcal{Y}_n+\mathcal{Y}_n- \hat{Y}_n |^2 \right\}\\
        \geq & (1-\gamma)\mathbb{E} \left\{|Y_n - \mathcal{Y}_n|^2\right\} - \frac{1}{\gamma}\mathbb{E}\left\{|\mathcal{Y}_n - \hat{Y}_n|^2\right\}.
    \end{aligned}
\end{equation*}
Therefore, by using estimate \eqref{eqn:estimate2_y_n_hat_y_n}, 
\begin{equation*}
    \begin{aligned}
        &\mathbb{E} \left\{|Y_n - \mathcal{Y}_n|^2\right\} \leq \frac{1}{1-\gamma}\mathbb{E}\left\{ |Y_n - \hat{Y}_n|^2\right\} + \frac{1}{(1-\gamma)\gamma}\mathbb{E}\left\{|\mathcal{Y}_n - \hat{Y}_n|^2\right\}\\
        &\leq K \bigg(\Delta t ^2  + \mathbb{E} \left\{\int_{t_n}^{t_{n+1}} |Y_s-Y_n|^2 ds\right\} + (1+\Delta t)  \mathbb{E}\left\{|Y_{n+1} - \mathcal{Y}_{n+1}|^2\right\}\\ 
        &\qquad \qquad \;\; + \mathbb{E} \left\{\int_{t_n}^{t_{n+1}} |Z_s-\bar{Z}_n|^2 ds \right\}+ \mathbb{E} \left\{\int_{t_n}^{t_{n+1}} |I_s-\bar{I}_n|^2 ds\right\} \\
        &\qquad \qquad \;\; + \Delta t\mathbb{E}\left\{ \int_{t_n}^{t_{n+1}} f^2_s ds\right\}+ \mathbb{E}\{|\mathcal{Y}_n - \hat{Y}_n|^2\}\bigg),
    \end{aligned}
\end{equation*}
which is clearly a iterative process about $\mathbb{E} \{|Y_n - \mathcal{Y}_n|^2\}$. Applying the Gronwall's lemma, recalling the terminal condition, Lemma \ref{lemma:L2regularityItem} and Lemma 4.7 in \cite{castroDeepLearningSchemes2022}, we have
\begin{equation}
    \begin{aligned}\label{eqn:max_Y_n_cal_Y_n}
        &\max_{n} \mathbb{E} \left\{|Y_n - \mathcal{Y}_n|^2\right\} \\
        &\qquad\leq K \bigg(\Delta t + \epsilon^Y+ \epsilon^Z  + \epsilon^I + \mathbb{E}\left\{|g(\mathcal{X}_N)-\mathcal{Y}_N|^2\right\}+ \sum_{n=1}^{N-1} \mathbb{E} \left\{|\mathcal{Y}_n-\hat{Y}_n|^2\right\}\bigg).\\
    \end{aligned}
\end{equation}

{\bf Step 3:}
Recalling the definition of $\hat{Y}_n$, according to Lemma \ref{lemma:MRT}, there exist $Z_t^*\in L^2(W)$ and $U_t^* \in L^2(\tilde{\mu})$, such that
\begin{equation*}
    \begin{aligned}
        \mathcal{Y}_{n+1} =& \mathbb{E}_n\{\mathcal{Y}_{n+1}\} + \int_{t_n}^{t_{n+1}} Z_s^{*\top} dW_s + \int_{t_n}^{t_{n+1}} \int_{E} U_s^* \tilde{\mu}(ds,de),\\
            = & \hat{Y}_n -  f(t_n, \mathcal{X}_n, \hat{Y}_n, \hat{Z}_n, \hat{I}_n) \Delta t + \int_{t_n}^{t_{n+1}} Z_s^{*\top} dW_s + \int_{t_n}^{t_{n+1}} \int_{E} U_s^* \tilde{\mu}(ds,de).\\
    \end{aligned}
\end{equation*}
Let us consider the one-step error in backward recursion \eqref{eqn:transfer-fun},
\begin{equation*}
    \begin{aligned}
        &\mathcal{Y}_{n+1} - \mathcal{T}(t_n,\mathcal{X}_n, \mathcal{Y}_n, \mathcal{Z}_n, \mathcal{I}_n)\\
        &=\hat{Y}_n -\mathcal{Y}_n + \big(f(t_n,\mathcal{X}_n, \mathcal{Y}_n, \mathcal{Z}_n, \mathcal{I}_n) - f(t_n, \mathcal{X}_n, \hat{Y}_n, \hat{Z}_n, \hat{I}_n)\big)  \Delta t\\
        &\quad + \int_{t_n}^{t_{n+1}} \left(Z_s^{*\top}-\mathcal{Z}_n^\top\right) dW_s + \int_{t_n}^{t_{n+1}} \left(\int_{E} U_s^* \tilde{\mu}(ds,de)-\mathcal{I}_nds\right) .\\
    \end{aligned}
\end{equation*}
Noting the decomposition of $(Z_s^{*\top}-\mathcal{Z}_n^\top)$ and $\left(\int_{E} U_s^* \tilde{\mu}(ds,de)-\mathcal{I}_nds\right)$, one can find that
\begin{equation*}
    \begin{aligned}
        &\mathcal{Y}_{n+1} - \mathcal{T}(t_n,\mathcal{X}_n, \mathcal{Y}_n, \mathcal{Z}_n, \mathcal{I}_n)\\
        &=\hat{Y}_n -\mathcal{Y}_n + \big(f(t_n,\mathcal{X}_n, \mathcal{Y}_n, \mathcal{Z}_n, \mathcal{I}_n) - f(t_n, \mathcal{X}_n, \hat{Y}_n, \hat{Z}_n, \hat{I}_n)\big)  \Delta t\\
        &\quad + \int_{t_n}^{t_{n+1}} \left(\hat{Z}^{\top}_n-\mathcal{Z}_n^\top\right) dW_s + \int_{t_n}^{t_{n+1}} \left( \hat{I}_n -\mathcal{I}_n\right)ds \\
        &\quad + \int_{t_n}^{t_{n+1}} \left(Z_s^{*\top}-\hat{Z}_n^\top\right) dW_s + \int_{t_n}^{t_{n+1}} \left(\int_{E} U_s^* \tilde{\mu}(ds,de)-\hat{I}_nds\right) .\\
    \end{aligned}
\end{equation*}
Employing the definition of It\^{o} integral and the compensated Poisson measure, we have
$$\mathbb{E}\left\{\int_{t_n}^{t_{n+1}} \left(Z_s^{*\top}-\hat{Z}_n^\top\right) dW_s\right\} =\mathbb{E}\left\{\int_{t_n}^{t_{n+1}} \left(\int_{E} U_s^* \tilde{\mu}(ds,de)-\hat{I}_nds\right) \right\}= 0,$$
and therefore,
\begin{equation}\label{eqn:loss_decomposition}
    \begin{aligned}
        &\mathbb{E}\left\{|\mathcal{Y}_{n+1} - \mathcal{T}(t_n,\mathcal{X}_n, \mathcal{Y}_n, \mathcal{Z}_n, \mathcal{I}_n)|^2 \right\}\\
        &= \mathbb{E}\left\{|\hat{Y}_n -\mathcal{Y}_n + \big(f(t_n,\mathcal{X}_n, \mathcal{Y}_n, \mathcal{Z}_n, \mathcal{I}_n) - f(t_n, \mathcal{X}_n, \hat{Y}_n, \hat{Z}_n, \hat{I}_n)\big)  \Delta t|^2\right\}\\
        &\quad + \Delta t \bigg( \mathbb{E}\left\{|\hat{Z}_n - \mathcal{Z}_n|^2+| \hat{I}_n-\mathcal{I}_n|^2\right\}\bigg)\\
        &\quad + \mathbb{E} \left\{|\int_{t_n}^{t_{n+1}}(Z_s^* - \hat{Z}_n)dW_s|^2\right\} + \mathbb{E} \left\{|\int_{t_n}^{t_{n+1}}(\int_{E} U_s^* \tilde{\mu}(ds,de) - \hat{I}_nds)|^2 \right\}.
    \end{aligned}
\end{equation}
Let $\mathcal{L}^{(1)}_n$ represent the first two terms of the above equation, and $\mathcal{L}^{(2)}_n$ represent the remaining terms. Since the last two terms are independent of the neural network parameters $\theta$, they do not influence the optimization process. However, $\mathcal{L}^{(1)}_n$ clearly depends on $\theta$, and thus affects the optimization. Then $\mathcal{L}^{(1)}_n$ can be denoted by
\begin{equation*}
    \begin{aligned}
        \mathcal{L}^{(1)}_n(\theta) =& \mathbb{E}\left\{|\hat{Y}_n -\mathcal{Y}_n + \big(f(t_n,\mathcal{X}_n, \mathcal{Y}_n, \mathcal{Z}_n, \mathcal{I}_n) - f(t_n, \mathcal{X}_n, \hat{Y}_n, \hat{Z}_n, \hat{I}_n)\big)  \Delta t|^2\right\}\\
        & + \Delta t \bigg( \mathbb{E}\left\{|\hat{Z}_n - \mathcal{Z}_n|^2+| \hat{I}_n-\mathcal{I}_n|^2\right\}\bigg).\\
    \end{aligned}
\end{equation*}
Using the Young inequality, we have
$$
\begin{aligned}
    &\mathbb{E}|\hat{Z}_n - \mathcal{Z}_n|^2 \\
    & \leq (1+K) \mathbb{E}|Z_n - \mathcal{Z}_n|^2 + (1+\frac{1}{K}) \mathbb{E}|\hat{Z}_n - Z_n|^2\\
    & \leq (1+K) \mathbb{E}|Z_n - \mathcal{Z}_n|^2 + (1+\frac{1}{K}) \mathbb{E}| \frac{1}{\Delta t}\mathbb{E}_n \{(\mathcal{Y}_{n+1} - Y_{n+1})\Delta W_n\}  |^2\\
    & \leq (1+K) \mathbb{E}|Z_n - \mathcal{Z}_n|^2 + (1+\frac{1}{K})\frac{1}{\Delta t} \mathbb{E} \{(\mathcal{Y}_{n+1} - Y_{n+1})^2\},
\end{aligned}
$$
and 
$$
\begin{aligned}
    &\mathbb{E}|\hat{I}_n - \mathcal{I}_n|^2 \\
    & \leq (1+K) \mathbb{E}|I_n - \mathcal{I}_n|^2 + (1+\frac{1}{K}) \mathbb{E}| \frac{1}{\Delta t}\mathbb{E}_n \{(\mathcal{Y}_{n+1} - Y_{n+1})\int_E \tilde{\mu}((t_n,t_{n+1}],de)\}  |^2\\
    & \leq (1+K) \mathbb{E}|I_n - \mathcal{I}_n|^2 + (1+\frac{1}{K})(\int_E \lambda(de))^2 \mathbb{E} \{(\mathcal{Y}_{n+1} - Y_{n+1})^2\}. \\
\end{aligned}
$$
Then employing Young inequality, Cauchy-Schwarz inequality, Lipschitz condition on $f$ and previous estimates, we arrive at 
\begin{equation*}
    \begin{aligned}
        \mathcal{L}^{(1)}_n(\theta)\leq& (1+K\Delta t) \mathbb{E}\{|\hat{Y}_n-\mathcal{Y}_n|^2\} + K\Delta t \mathbb{E} \big\{ |\hat{Z}_n - \mathcal{Z}_n|^2+| \hat{I}_n-\mathcal{I}_n|^2\big\}, \\
        \leq& (1+K\Delta t) \mathbb{E}\{|\hat{Y}_n-\mathcal{Y}_n|^2\} + K\Delta t \mathbb{E} \big\{ |Z_n - \mathcal{Z}_n|^2+| I_n-\mathcal{I}_n|^2\big\}\\
        & + K (1+\Delta t)\mathbb{E} \big\{ |Y_{n+1} - \mathcal{Y}_{n+1}|^2\big\},\\
    \end{aligned}
\end{equation*}
and
\begin{equation*}
    \mathcal{L}^{(1)}_n(\theta)\geq (1 - \gamma \Delta t) \mathbb{E}\{|\hat{Y}_n-\mathcal{Y}_n|^2\} + \frac{\Delta t}{2} \mathbb{E} \big\{ |\hat{Z}_n - \mathcal{Z}_n|^2+| \hat{I}_n-\mathcal{I}_n|^2\big\}.
\end{equation*}
Recall the definition of the loss function \eqref{eqn:loss} to get
\begin{equation*}
    \begin{aligned}
        (N+1) &\mathcal{L}(\theta) = \sum_{n=0}^{N-1}\mathbb{E}\{|\mathcal{Y}_{n+1} - \mathcal{T}\left(t_n,\mathcal{X}_n,\mathcal{Y}_n,\mathcal{Z}_n\right)|^2\} + \mathbb{E}\{|\mathcal{Y}_N-g(\mathcal{X}_N)|^2\}\\
        = &  \sum_{n=0}^{N-1} \mathcal{L}^{(1)}_n(\theta)  + \mathbb{E}\{|\mathcal{Y}_N-g(\mathcal{X}_N)|^2\} + \sum_{n=0}^{N-1}\mathcal{L}_n^{(2)}\\
        \leq &  (1+K\Delta t) \sum_{n=0}^{N-1}\mathbb{E}\{|\hat{Y}_n-\mathcal{Y}_n|^2\} + K\Delta t \sum_{n=0}^{N-1}\mathbb{E} \big\{ |Z_n - \mathcal{Z}_n|^2+| I_n-\mathcal{I}_n|^2\big\}\\
        &\quad +K(1+\Delta t) \sum_{n=1}^{N}\mathbb{E} \big\{ |Y_{n} - \mathcal{Y}_{n}|^2\big\}+ \mathbb{E}\{|\mathcal{Y}_N-g(\mathcal{X}_N)|^2\}+ \sum_{n=0}^{N-1}\mathcal{L}_n^{(2)}.
    \end{aligned}
\end{equation*}
Similarly for the lower estimate
\begin{equation*}
    (N+1) \mathcal{L}(\theta) \geq (1 - \gamma \Delta t) \sum_{n=0}^{N-1}\mathbb{E}\{|\hat{Y}_n-\mathcal{Y}_n|^2\}+ \mathbb{E}\{|\mathcal{Y}_N-g(\mathcal{X}_N)|^2\}+\sum_{n=0}^{N-1} \mathcal{L}_n^{(2)}.\\
\end{equation*}

By the definition of $\theta^* = \arg\min_{\theta}\mathcal{L}(\theta)$, we obtain
\begin{equation*}
    \begin{aligned}
        &(1 - \gamma \Delta t) \sum_{n=0}^{N-1}\mathbb{E}\{|\hat{Y}_n-\mathcal{Y}_n(\theta^*)|^2\}+ \mathbb{E}\{|\mathcal{Y}_N(\theta^*)-g(\mathcal{X}_N)|^2\}\\
        & \quad \leq (1+K\Delta t) \sum_{n=0}^{N-1}\mathbb{E}\{|\hat{Y}_n-\mathcal{Y}_n|^2\} + K\Delta t \sum_{n=0}^{N-1}\mathbb{E} \big\{ |Z_n - \mathcal{Z}_n|^2+| I_n-\mathcal{I}_n|^2\big\}\\
        &\qquad\quad +K(1+\Delta t) \sum_{n=1}^{N}\mathbb{E} \big\{ |Y_{n} - \mathcal{Y}_{n}|^2\big\}+ \mathbb{E}\{|\mathcal{Y}_N-g(\mathcal{X}_N)|^2\},
    \end{aligned}
\end{equation*}
where $\mathcal{Y}_n(\theta^*)$ denotes the estimated solution $\mathcal{Y}_n$ with optimal parameter $\theta^*$ and $\mathcal{Y}_n$ denotes the estimated solution $\mathcal{Y}_N$ with arbitrary parameter $\theta$. Obviously by considering equation \eqref{eqn:estimate_y_n_hat_y_n_sum_condition}, we obtain
\begin{equation*}
    \begin{aligned}
        &\sum_{n=0}^{N-1}\mathbb{E}\{|\hat{Y}_n-\mathcal{Y}_n(\theta^*)|^2\}\\
        &\quad\quad \leq K \bigg\{ \sum_{n=0}^{N-1}\mathbb{E}\{|\hat{Y}_n-\mathcal{Y}_n|^2\} +(1+\Delta t)\sum_{n=1}^{N}\mathbb{E} \big\{ |Y_{n} - \mathcal{Y}_{n}|^2\big\} \\
        &\quad\quad\quad\qquad + \Delta t \sum_{n=0}^{N-1}\mathbb{E} \big\{ |Z_n - \mathcal{Z}_n|^2+| I_n-\mathcal{I}_n|^2\big\} + \mathbb{E}\{|\mathcal{Y}_N-g(\mathcal{X}_N)|^2\} \bigg\}\\
        &\quad\quad \leq K \bigg\{ (1+K)\sum_{n=0}^{N-1}\mathbb{E}\{|\hat{Y}_n-Y_n|^2\} +(2+\frac{1}{K}+\Delta t)\sum_{n=1}^{N}\mathbb{E} \big\{ |Y_{n} - \mathcal{Y}_{n}|^2\big\} \\
        &\quad\quad\quad\qquad + \Delta t \sum_{n=0}^{N-1}\mathbb{E} \big\{ |Z_n - \mathcal{Z}_n|^2+| I_n-\mathcal{I}_n|^2\big\} + \mathbb{E}\{|\mathcal{Y}_N-g(\mathcal{X}_N)|^2\} \bigg\}\\
        &\quad\quad \leq K \bigg\{\Delta t+ \epsilon^Y + \epsilon^Z+ \epsilon^I + (1+\Delta t)  \sum_{n=1}^{N}\mathbb{E}\{|Y_{n} - \mathcal{Y}_{n}|^2\} \\
        &\quad\quad\quad\qquad + \Delta t \sum_{n=0}^{N-1}\mathbb{E} \big\{ |Z_n - \mathcal{Z}_n|^2+| I_n-\mathcal{I}_n|^2\big\} + \mathbb{E}\{|\mathcal{Y}_N-g(\mathcal{X}_N)|^2\} \bigg\}.
    \end{aligned}
\end{equation*}
Substituting the above equation into \eqref{eqn:max_Y_n_cal_Y_n}, we obtain
\begin{equation}\label{eqn:final_estimate_with_NN}
    \begin{aligned}
        &\max_{n} \mathbb{E} \{|Y_n - \mathcal{Y}_n(\theta^*)|^2\} \leq K \bigg(\Delta t +  \epsilon^Y + \epsilon^Z+ \epsilon^I +\sum_{n=1}^{N}\mathbb{E} \big\{ |Y_{n} - \mathcal{Y}_{n}|^2\big\}\\
        &\qquad\quad + \Delta t \sum_{n=0}^{N-1}\mathbb{E} \big\{ |Z_n - \mathcal{Z}_n|^2+| I_n-\mathcal{I}_n|^2\big\} + \mathbb{E}\{|g(\mathcal{X}_N)-\mathcal{Y}_N|^2\}\bigg).
    \end{aligned}
\end{equation}

{\bf Step 4:}
Following \cite{hureDeepBackwardSchemes2020,castroDeepLearningSchemes2022}, for simplicity, we consider a single hidden layer neural network with $m$ neurons, where the activation function is three times continuously differentiable. Lemma \ref{lemma:UAT} implies
\begin{equation*}
    \lim_{m \to \infty} \epsilon^{\mathcal{Y}} = 0.
\end{equation*}
Let $\gamma_m$ denotes the total variation norm of the given neural networks. Since  $\mathcal{Z}_n$ is calculated through gradient of the given neural network, we have (see, e.g., \cite{hureDeepBackwardSchemes2020}),
\begin{equation*}
    \lim_{m,N \to \infty} \frac{\gamma_m^6}{N} = 0, \quad\text{and},\quad \max_n \mathbb{E} \{|Z_n - \mathcal{Z}_n|^2\} \leq K \big(\gamma_m^6 + \gamma_m^8 \Delta t^2 \big) \Delta t^2.
\end{equation*}
For the integral approximation error, noting \eqref{Taylor-integro} and the assumption \ref{assumption:A2}, we have
\begin{equation*}
    \mathbb{E} \big\{| I_n-\mathcal{I}_n|^2\big\} = \mathbb{E} \left\{ | \Delta t\int_{E} o\big(\beta (t_n,X_{n},e)\big) \lambda(de)|^2\right\} \leq K \left(\int_E \lambda(de)\right)^2 \Delta t^2.
\end{equation*}

Combining all the estimates for all terms, we obtain the desired result \eqref{eqn:Consistency of the FBSJNN} and complete the proof of the theorem.
\end{proof}

In a more simplified form, the estimate (\ref{eqn:Consistency of the FBSJNN}) can be rewritten as
\begin{align}\label{eq3.16}
    &\max_{n} \big(\mathbb{E} \{|Y_n - \mathcal{Y}_n(\theta^*)|^2\} + \mathbb{E} \{|Z_n - \mathcal{Z}_n(\theta^*)|^2\} + \mathbb{E} \big\{| I_n-\mathcal{I}_n(\theta^*)|^2\big\}\big)\nonumber\\
    \leq& K \big(\Delta t + (N+1)\epsilon^{\mathcal{Y}}\big).
\end{align}

We end this section with some remarks on our theoretical results and their proof.

Theorem \ref{thm:Consistency of the FBSJNN} and (\ref{eq3.16}) reveal that the error of the FBSJNN method is controlled by time discretization error and neural network approximation error. From \eqref{eq3.16}, in view of the Universal Approximation Theorem \ref{lemma:UAT} which implies $\lim_{m \to \infty} \epsilon^{\mathcal{Y}} = 0$, we can find that as the step-size of the time discretization Euler-Maruyama method for the FSDEs approaches to $0$, the neural network approximation $\mathcal{Y}_n(\theta^*)$, its gradient $\mathcal{Z}_n(\theta^*)$ and non-local integral $\mathcal{I}_n(\theta^*)$ converge to the exact solution $Y_n$, and the corresponding gradient $Z_n$ and integral $I_n$ of the BSDEs whenever the network width $m\to \infty$. This further implies the neural network approximation $\mathcal{Y}_n(\theta^*)=u(t_n,\mathcal X_n)$ converge to the solution $u(t_n,x)$ of PIDEs.

Although the inequality \eqref{eqn:max_Y_n_cal_Y_n} provides an estimate of the error between the exact solution $Y_n$ and the numerical approximation $\mathcal{Y}_n$, the neural network parameter $\theta$ can not be directly set to the optimal parameter defined in \eqref{eqn:optimal-para}. Specifically, the optimal parameter for the loss function \eqref{eqn:loss} may not correspond to the optimal parameter for the neural network that achieves the universal approximation as stated in Lemma \ref{lemma:UAT}. That is why we must continue refining our approach to obtain a more accurate estimate.

The results of the theorem reveal deeper implications regarding the parameters of the neural network in relation to the inequality \eqref{eqn:Consistency of the FBSJNN}. On the left-hand side, the parameter $\theta^*$ represents the optimal parameter obtained through a minimization process, which we refer to as the \textit{method error}. In contrast, the term on the right-hand side is encapsulated in the error $\epsilon^{\mathcal{Y}} = \inf_{m\in\mathbb{Z}^*} \max_n \mathbb{E}{|Y_n - \mathcal{Y}_n|^2}$, which derives from the Universal Approximation Theorem \ref{lemma:UAT}, and we refer to this as the \textit{universal error}. According to \eqref{eqn:Consistency of the FBSJNN}, the method error is bounded by the sum of the universal error and the regularities of the given problem. This relationship suggests that the proposed FBSJNN method, particularly the design of the loss function \eqref{eqn:loss}, is well-suited to guide the optimization algorithm toward achieving the best possible approximation.

Generally speaking, we address a gap in understanding whether a loss function can genuinely guide optimization algorithms toward an optimal approximation. Current work tends to focus on proposing new loss functions, often overlooking an assessment of their soundness. Our method approaches this issue from a stochastic analysis perspective, providing a logical framework for evaluating this relationship.

\section{Numerical Examples}
\label{section:Numerical Examples}
In this section, we provide numerical examples to validate the effectiveness of the FBSJNN method proposed in this paper. Some numerical examples are adapted from \cite{luTemporalDifferenceLearning2023, raissiForwardBackwardStochastic2018}. The code for this paper is available at \url{https://github.com/yezaijun/FBSJNN}.

\subsection{One-dimensional pure jump equation}
\label{subsection:purejump}
Consider the following pure jump process \cite{luTemporalDifferenceLearning2023}
\begin{equation*}
    \left\{
    \begin{aligned}
        \frac{\partial u}{\partial t} + \int_{\mathbb{R}}\big(u(t,xe^z)-u(t,x)-x(e^z-1)\frac{\partial u}{\partial x}\big)\nu(de)=0, \\
        u(T,x)=x,
    \end{aligned}
    \right.
\end{equation*}
where $\nu(de) = \lambda \phi(e)de $, and $\phi(e)$ is the probability density function of a normal distribution with mean $\mu_\phi$ and variance $\sigma_\phi^2$. Here, constant $\lambda$ is the intensity of poisson jumps. The true solution to this problem is $u(t,x)=x$. The corresponding discrete FBSDEJ is given by
\begin{equation*}
    \left\{
    \begin{aligned}
        X_{n+1} = & X_n +  \sum_{i=1}^{\mu_n} X_n(e^{z_i}-1) - \lambda X_n (e^{\mu_\phi+\frac{1}{2}\sigma_\phi^2}-1) \Delta t,      \\
        Y_{n+1} = & Y_{n} + \sum_{i=1}^{\mu_n} \hat{u}_n(z_i) - \lambda X_n (e^{\mu_\phi+\frac{1}{2}\sigma_\phi^2}-1)  \frac{\partial Y_{n}}{\partial x} \Delta t.
    \end{aligned}
    \right.
\end{equation*}
Taking $T=1, X_0 = 1, \lambda = 0.3, \mu_\phi = 0.4, \sigma_\phi = 0.25$, and dividing time into equal intervals with $N = 50$, we designed a neural network with 2 hidden layers, each with 16 neurons, and ReLU activation function. In each iteration of the neural network, we used the Adam optimizer, and simulated 1000 trajectories of $\{\mathcal{X}_n\}$ using the Monte-Carlo method as input data. After $5000$ training iterations, the numerical results are given in Fig. \ref{1dimpure_loss_MRE} and Table \ref{tab:PureJump}.

\begin{figure}[htbp]
    \centering
    \begin{tabular}{cc}
        \begin{subfigure}{0.45\textwidth}
            \centering
            \includegraphics[width=\textwidth]{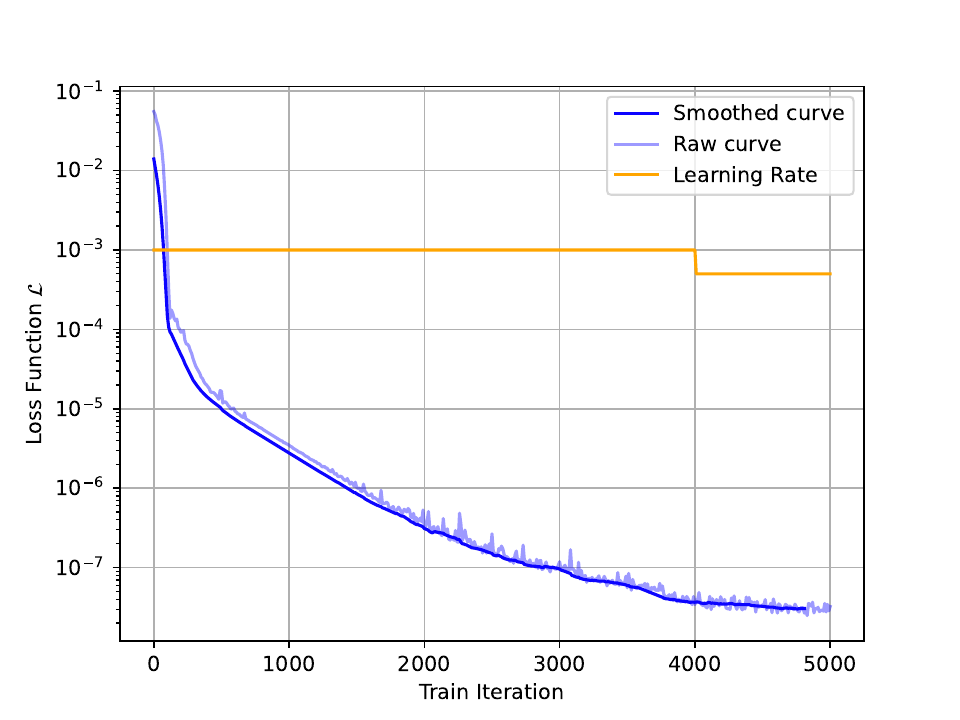}
            \caption{Loss function}
            \label{fig:1dpPIDE_loss}
        \end{subfigure}   &
        \begin{subfigure}{0.45\textwidth}
            \centering
            \includegraphics[width=\textwidth]{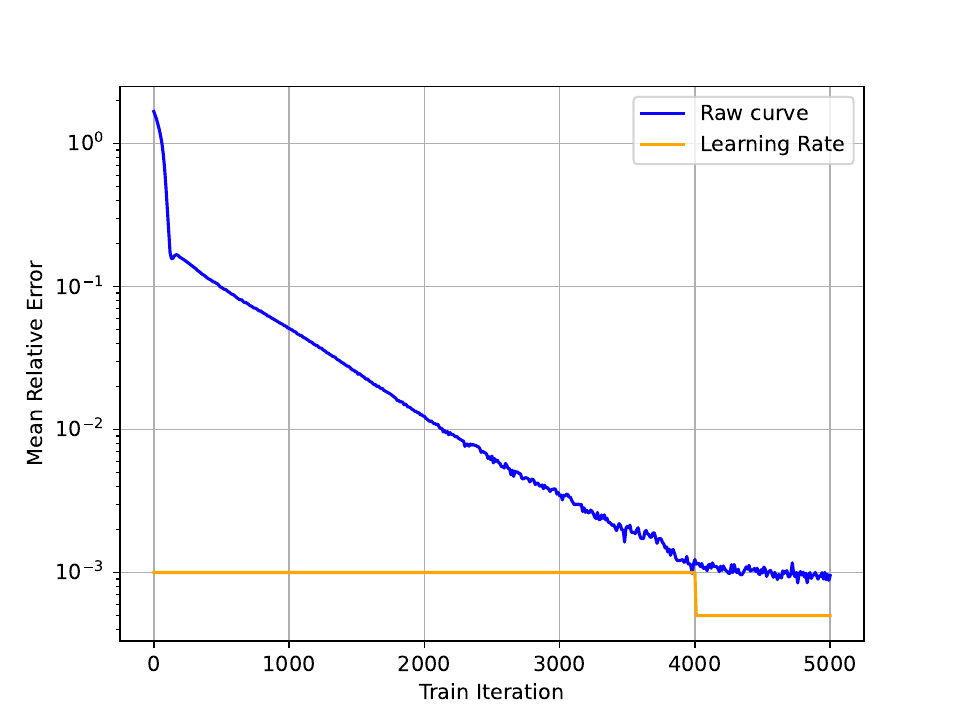}
            \caption{Mean relative error}
            \label{fig:1dpPIDE_MRE}
        \end{subfigure}    \\
        \begin{subfigure}{0.45\textwidth}
            \centering
            \includegraphics[width=\textwidth]{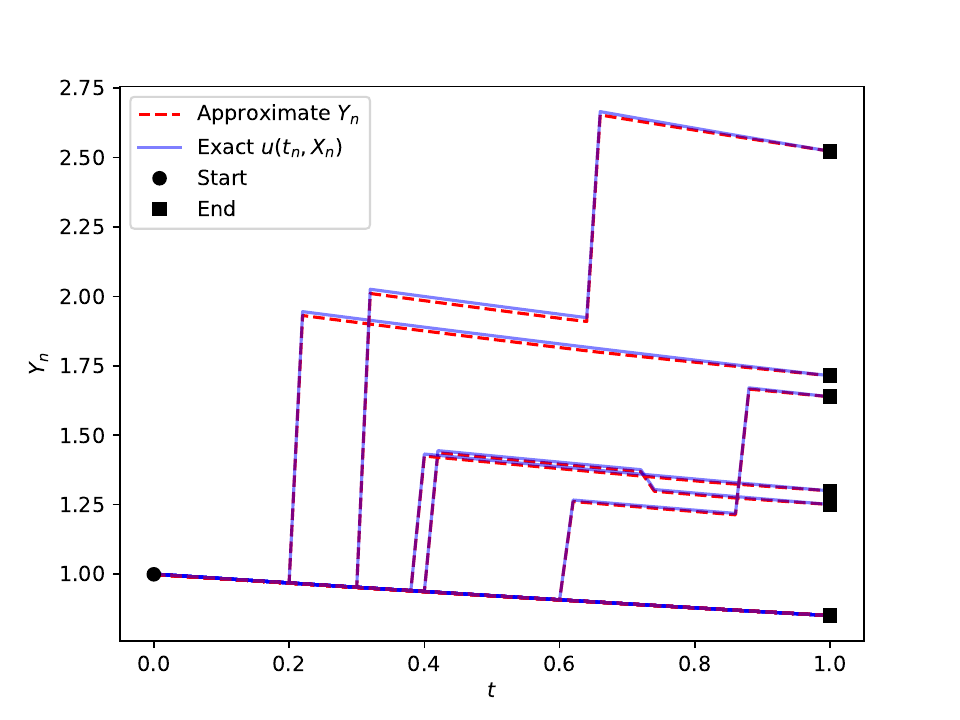}
            \caption{Sample path fitting}
            \label{fig:1dpPIDE_sample}
        \end{subfigure} &
        \begin{subfigure}{0.45\textwidth}
            \centering
            \includegraphics[width=\textwidth]{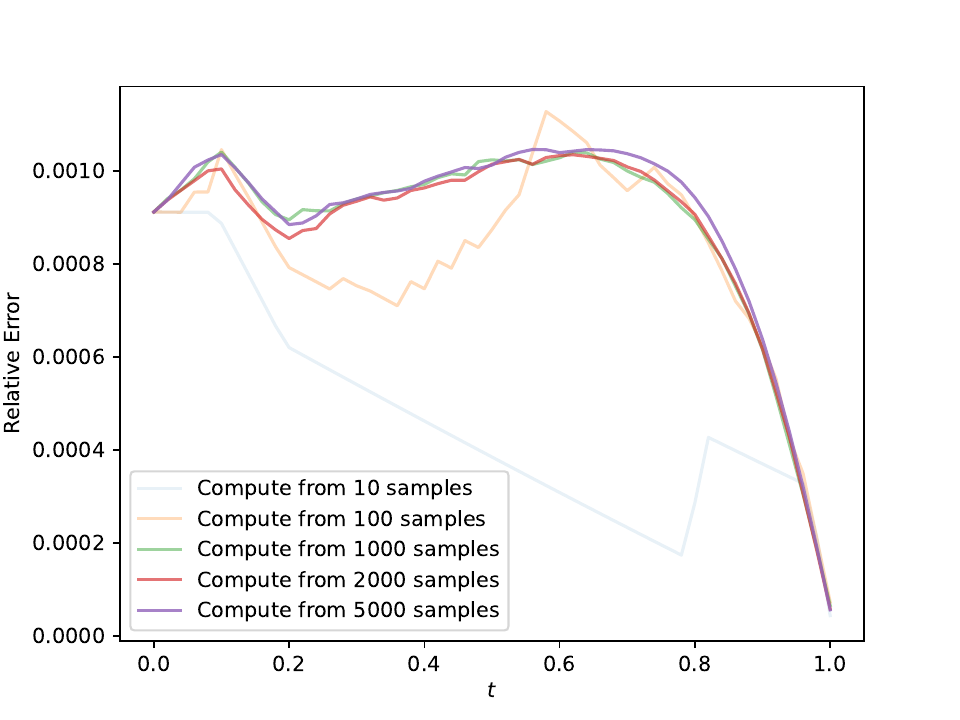}
            \caption{Mean relative error by time}
            \label{fig:1dpPIDE_erbyt}
        \end{subfigure}  \\
    \end{tabular}
    \caption{Solution of the one-dimensional pure jump equation.}
    \label{1dimpure_loss_MRE}
\end{figure}

\begin{table}[h]
    \centering
    \caption{Numerical results of neural network approximation for one-dimensional pure jump equation.}
    \label{tab:PureJump}
    \begin{tabular}{cccc}
        \toprule
        Iteration & Loss Function & Relative Error & Learning Rate \\
        \midrule
        1000      & 3.46e-06      & 5.09\%         & 0.0010        \\
        2000      & 3.17e-07      & 1.24\%         & 0.0010        \\
        3000      & 1.19e-07      & 0.34\%         & 0.0010        \\
        4000      & 3.42e-08      & 0.12\%         & 0.0010        \\
        5000      & 3.23e-08      & 0.10\%         & 0.0005        \\ \bottomrule
    \end{tabular}
\end{table}

Figure \ref{1dimpure_loss_MRE} shows that as the neural network iterations increase, the loss function value continuously decreases and gradually levels off. The average relative error also shows a similar trend. After simulating the trajectories of $\{\mathcal{X}_n\}$, we found that the network fitting results are good. By calculating the relative error at different time points, we found that the error over the entire time interval can be controlled within 0.1\%, and the relative error at the terminal condition is even closer to 0. Table \ref{tab:PureJump} reports the specific values during the iteration process. It can be seen that after 5000 iterations, the relative error of the final numerical solution can be controlled to 0.1\%, indicating that the method in this paper achieves rapid convergence and high accuracy for the one-dimensional pure jump problem, and has better performance compared to Lu et al. (2023) \cite{luTemporalDifferenceLearning2023}.

\subsection{One-dimensional partial integro-differential equation}

Using the notation from the pure jump equation, we consider the following more general PIDE (with diffusion and convection terms):
\begin{equation*}
    \label{eqn:numercial_oneDimPIDE}
    \left\{
    \begin{aligned}
        \frac{\partial u}{\partial t}(t,x)+ & \frac{1}{2}\tau^2 \frac{\partial u^2}{\partial^2 x}(t,x) + \epsilon x \frac{\partial u}{\partial x}(t,x) \\
        +                                   & \int_{\mathbb{R}}(u(t,xe^z)-u(t,x)-x(e^z-1)\frac{\partial u}{\partial x})\nu(de)=\epsilon x ,              \\
        u(T,x)=                             & x.
    \end{aligned}
    \right.
\end{equation*}
The true solution to this problem is $u(t,x)=x$. The corresponding discrete FBSDEJ format is
\begin{equation*}
    \left\{
    \begin{aligned}
        X_{n+1} = X_n +   & \epsilon x \Delta t + \tau \Delta W_n
        +\sum_{i=1}^{\mu_n} X_n(e^{z_i}-1) - \lambda X_n (e^{\mu_\phi+\frac{1}{2}\sigma_\phi^2}-1) \Delta t, \\
        Y_{n+1} = Y_{n} + & \epsilon x \Delta t + Z_n \Delta W_n
        + \sum_{i=1}^{\mu_n} \hat{u}_n(z_i) - \lambda X_n (e^{\mu_\phi+\frac{1}{2}\sigma_\phi^2}-1)  \frac{\partial Y_{n}}{\partial x} \Delta t.
    \end{aligned}
    \right.
\end{equation*}

\begin{table}[h]
    \centering
    \caption{Numerical results of neural network approximation for one-dimensional PIDE.}
    \label{table:1dPIDE_iteration_data}
    \begin{tabular}{cccc}
        \toprule
        Iteration Steps & Loss Function & Relative Error & Learning Rate \\
        \midrule
        2000            & 2.45e-06      & 1.37\%         & 1e-3          \\
        4000            & 3.41e-07      & 0.30\%         & 1e-3          \\
        6000            & 2.31e-07      & 0.25\%         & 1e-6          \\
        8000            & 2.94e-07      & 2.58\%         & 1e-6          \\
        10000           & 3.76e-07      & 0.39\%         & 1e-8          \\
        \bottomrule
    \end{tabular}
\end{table}

We take $T=1, X_0 = 1, \lambda = 0.3, \tau = 0.4, \epsilon=0.25, \mu_\phi = 0.4, \sigma_\phi = 0.25$, divide time equally into $N = 50$ as in Section \ref{subsection:purejump}, and keep the neural network model and training data settings unchanged. In this example, 10000 iterations were performed.

\begin{figure}[h]
    \centering
    \begin{tabular}{cc}
        \begin{subfigure}{0.45\textwidth}
            \centering
            \includegraphics[width=\textwidth]{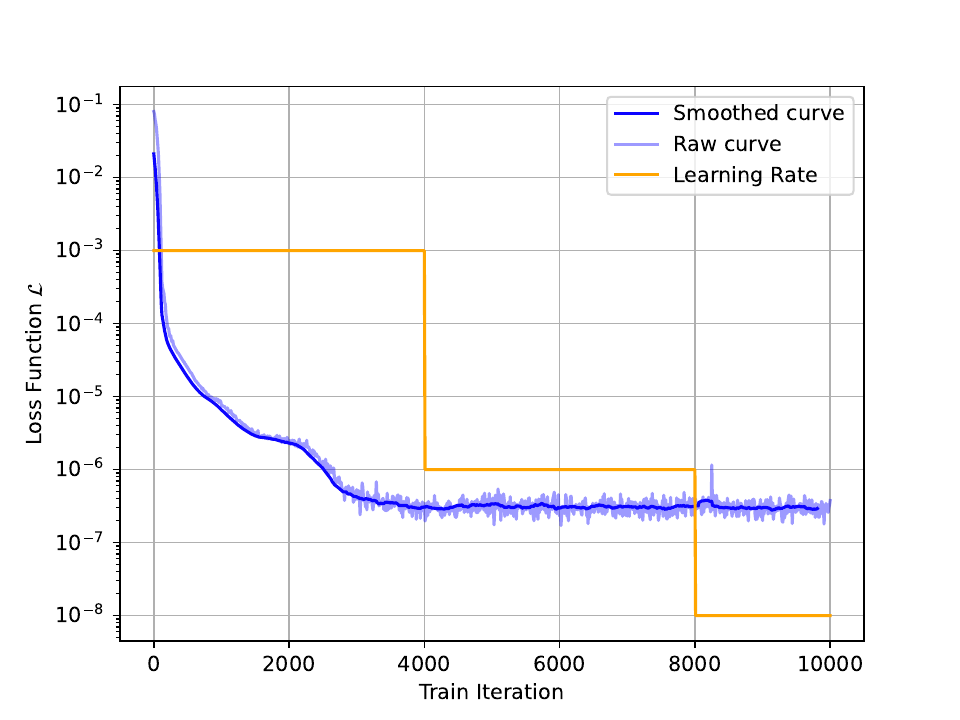}
            \caption{Loss function}
            \label{fig:1dPIDE_loss}
        \end{subfigure}   &
        \begin{subfigure}{0.45\textwidth}
            \centering
            \includegraphics[width=\textwidth]{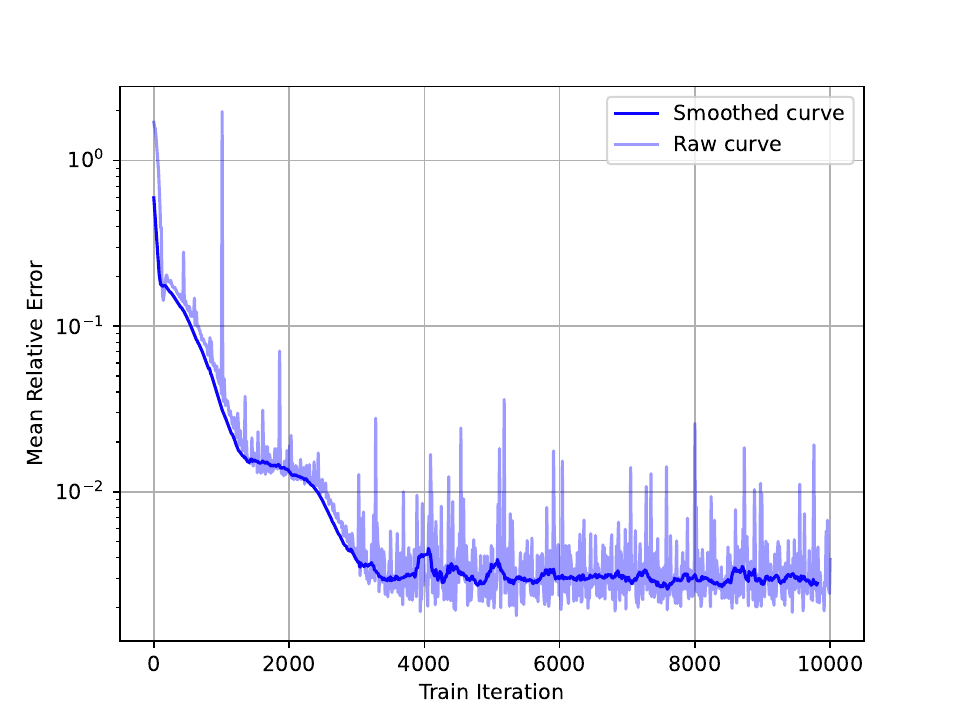}
            \caption{Relative error}
            \label{fig:1dPIDE_MRE}
        \end{subfigure}    \\
        \begin{subfigure}{0.45\textwidth}
            \centering
            \includegraphics[width=\textwidth]{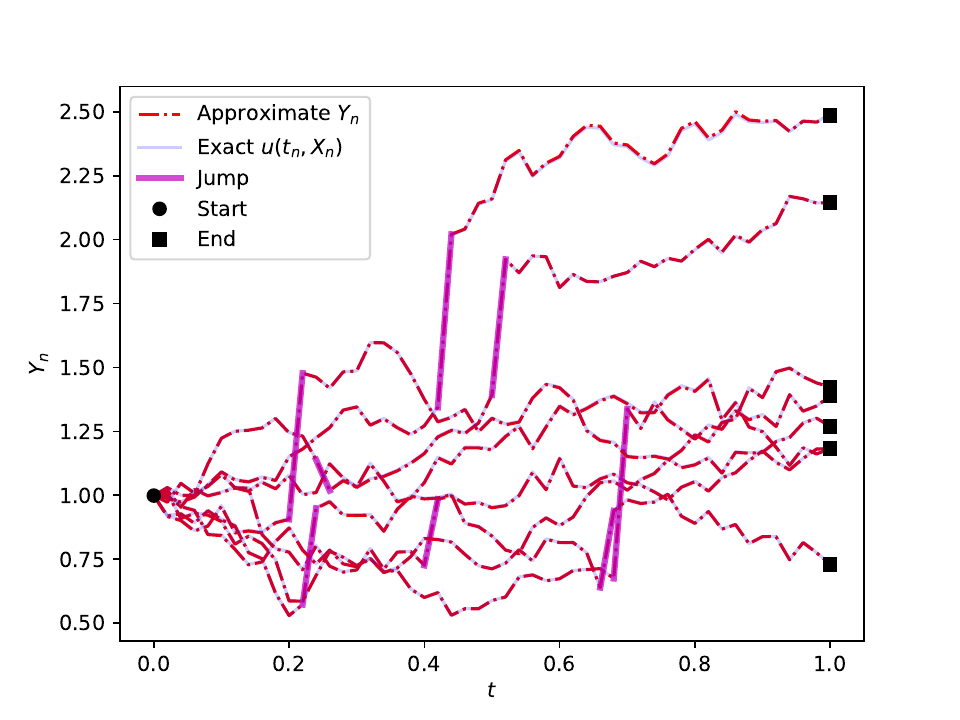}
            \caption{Sample path fitting}
            \label{fig:1dPIDE_sample}
        \end{subfigure} &
        \begin{subfigure}{0.45\textwidth}
            \centering
            \includegraphics[width=\textwidth]{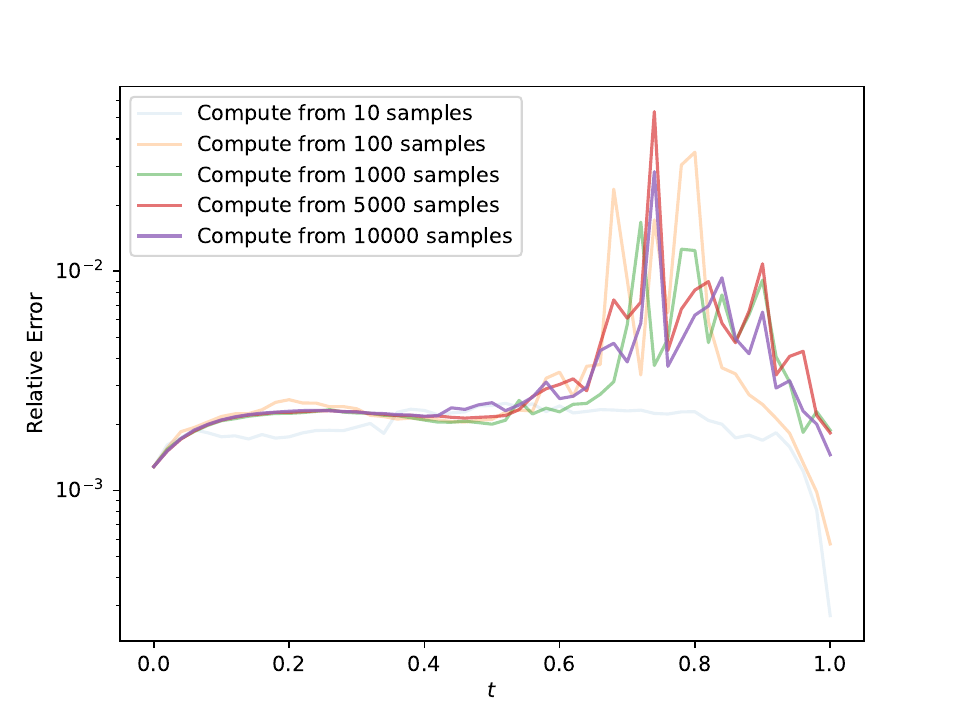}
            \caption{Relative error at time nodes}
            \label{fig:1dPIDE_erbyt}
        \end{subfigure}  \\
    \end{tabular}
    \caption{Solution of the one-dimensional partial integro-differential equation.}
    \label{fig:1dimPIDE_loss_MRE}
\end{figure}

Based on the training results in Table \ref{table:1dPIDE_iteration_data} and Fig. \ref{fig:1dimPIDE_loss_MRE}, after 4000 iterations, the loss function fluctuates and no longer decreases. This may be due to the small scale of the neural network, which has reached an optimal point and is difficult to further optimize. For the relative error, we also observe that it reaches the magnitude of 0.1\% at 4000 steps, within an acceptable range. To avoid overfitting and maintain the generalization performance of the model, we keep the current network structure and use the network weights at 4000 steps as the final numerical solution. From Figure \ref{fig:1dPIDE_erbyt}, we can see that the current network fits well at the endpoints $t=0$ and $t=1$, while there is a larger relative error for $t\in[0.6,0.9]$. This is because the endpoint $t=0$ is the starting point of all trajectories, with abundant samples favorable for model fitting; while the endpoint $t=1$ has a definite terminal value condition $g(\cdot)$, also aiding in model fitting. In contrast, the trajectories in the middle time range are more scattered, and the loss is controlled by the recursive formula of the BSDE, which easily leads to error propagation, making the model fitting more challenging.

\begin{figure}[h]
    \centering
    \includegraphics[width=0.8\textwidth]{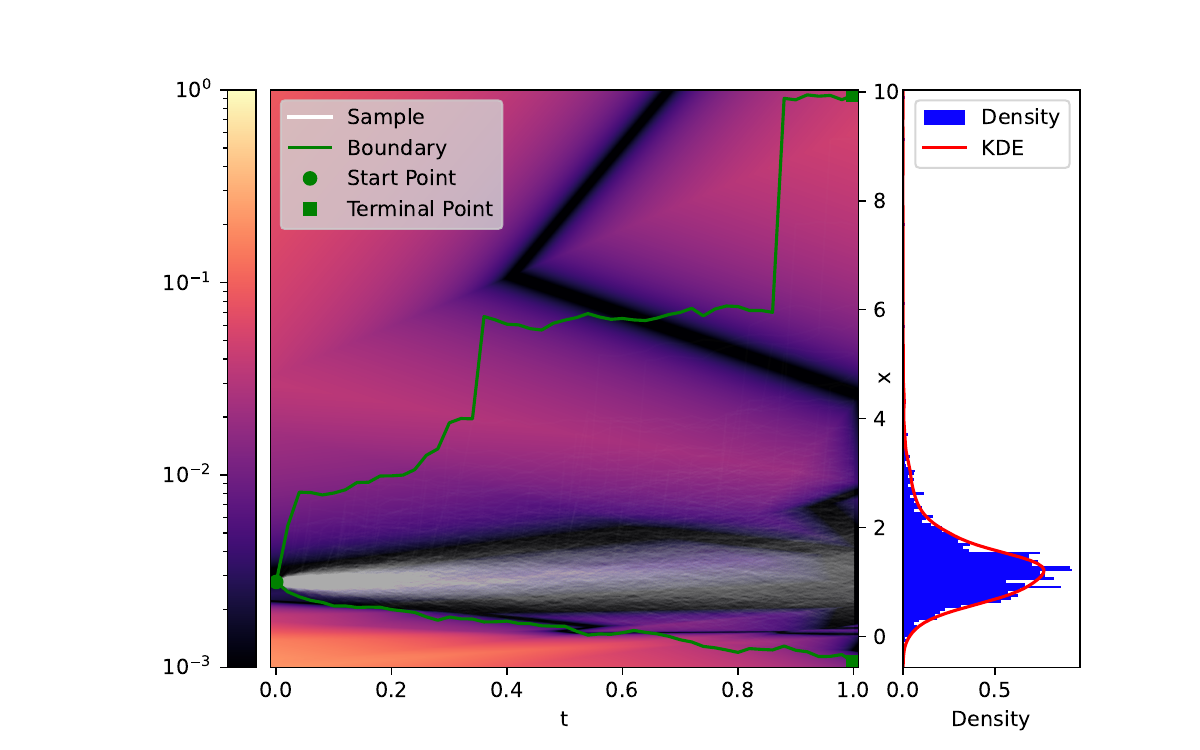}
    \caption{Heat map of absolute errors in spatio-temporal region.}
    \label{fig:1dPIDE_hot}
\end{figure}

To further examine the effectiveness of the numerical algorithm in the spatio-temporal region, we consider 2000 trajectory paths and calculate the absolute errors at all points in this interval, as shown in Figure \ref{fig:1dPIDE_hot}. The green lines represent the upper and lower bounds of 2000 trajectories, the white transparent lines represent the sample paths of the FSDE, and each colored block represents the absolute error of the numerical solution at that point. The histogram on the right shows the distribution of trajectory endpoints, where the red line is the kernel density estimation (KDE) of the endpoint distribution. This figure illustrates that the endpoint of FSDE trajectories mainly concentrates between $(0,2)$, where the absolute error of the numerical solution can be less than 1\%; however, in other areas, the absolute errors of the neural network fitting values are larger, which is related to the fewer samples reaching these positions.


\subsection{High-dimensional partial integro-differential equation}

To further verify the effectiveness of the numerical methods proposed in this paper for high-dimensional problems, we consider the following equation
\begin{equation*}
    \left\{
    \begin{aligned}
        \frac{\partial u}{\partial t}(t,x)+ & \frac{1}{2}\mathrm{Tr}(\tau^2 \nabla_x^2 u(t,x)) +\big\langle \frac{1}{2}\epsilon x ,\nabla_x u(t,x) \big\rangle                                                  \\
        +                                   & \int_{\mathbb{R}}(u(t,x+e)-u(t,x)-\big\langle e,\nabla_x u(t,x)\big\rangle)\nu(de)= \lambda (\mu_{\phi}^2+\sigma_{\phi}^2) + \tau^2 + \frac{\epsilon}{d} ||x||^2, \\
        u(T,x)=                             & \frac{1}{d} ||x||^2.
    \end{aligned}
    \right.
\end{equation*}
The true solution to this problem is $u(t,x)=\frac{1}{d} ||x||^2$. The corresponding discrete FBSDEJ is:

\begin{equation*}
    \left\{
    \begin{aligned}
        X_{n+1} = X_n +   & \frac{1}{2}\epsilon X_n \Delta t + \tau \Delta W_n
        +\sum_{i=1}^{\mu_n} z_i - \lambda \mu_\phi \Delta t,\\
        Y_{n+1} = Y_{n} + & \big(\lambda (\mu_{\phi}^2+\sigma_{\phi}^2) + \tau^2 + \frac{\epsilon}{d} ||x||^2\big) \Delta t + Z_n^\top \Delta W_n\\
        &+ \sum_{i=1}^{\mu_n} \hat{u}_n(z_i) - \lambda \mu\phi  \nabla_x Y_{n}^\top \mathbf{1} \Delta t.
    \end{aligned}
    \right.
\end{equation*}

Taking $d = 100,~T=1,~ X_0 = \mathbf{1}=(1,1,\cdot,1)^T, ~\lambda = 0.3, ~\tau = 0.1, ~\mu_\phi = 0.01, \sigma_\phi = 0.1$, and equidistant time division $N = 50$, we use an Adam optimizer with a neural network containing 2 hidden layers, each with 256 neurons, and a Leaky ReLU function as the active function. After $30,000$ iterations, the final result is shown in Fig. \ref{fig:highdimPIDE_loss_MRE}.

\begin{figure}[h]
    \centering
    \begin{tabular}{cc}
        \begin{subfigure}{0.45\textwidth}
            \centering
            \includegraphics[width=\textwidth]{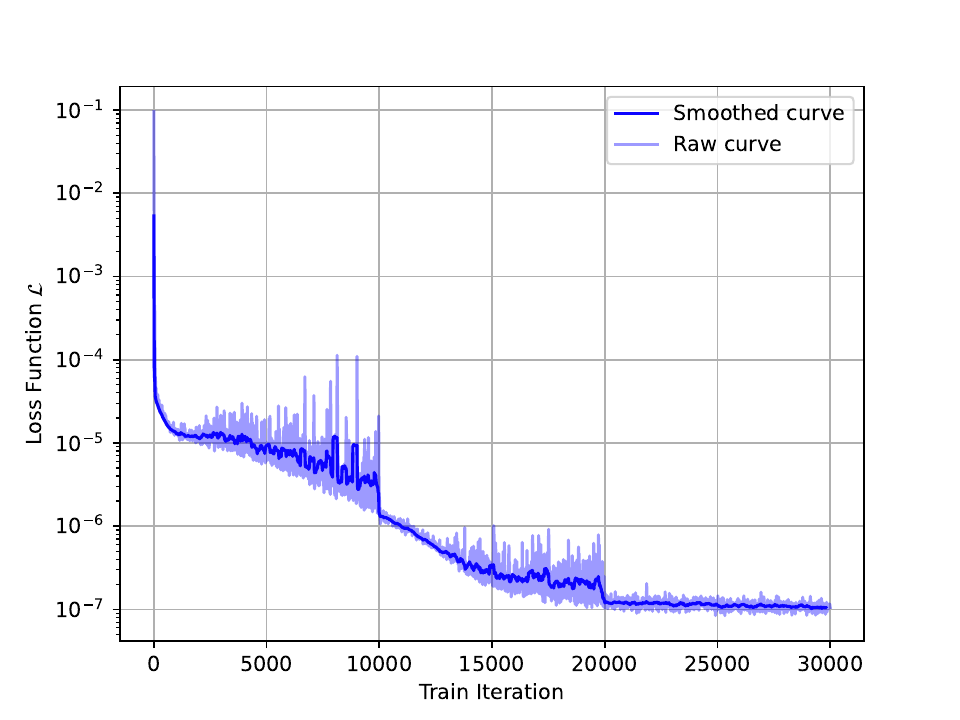}
            \caption{Loss function}
            \label{fig:hdPIDE_loss}
        \end{subfigure}   &
        \begin{subfigure}{0.45\textwidth}
            \centering
            \includegraphics[width=\textwidth]{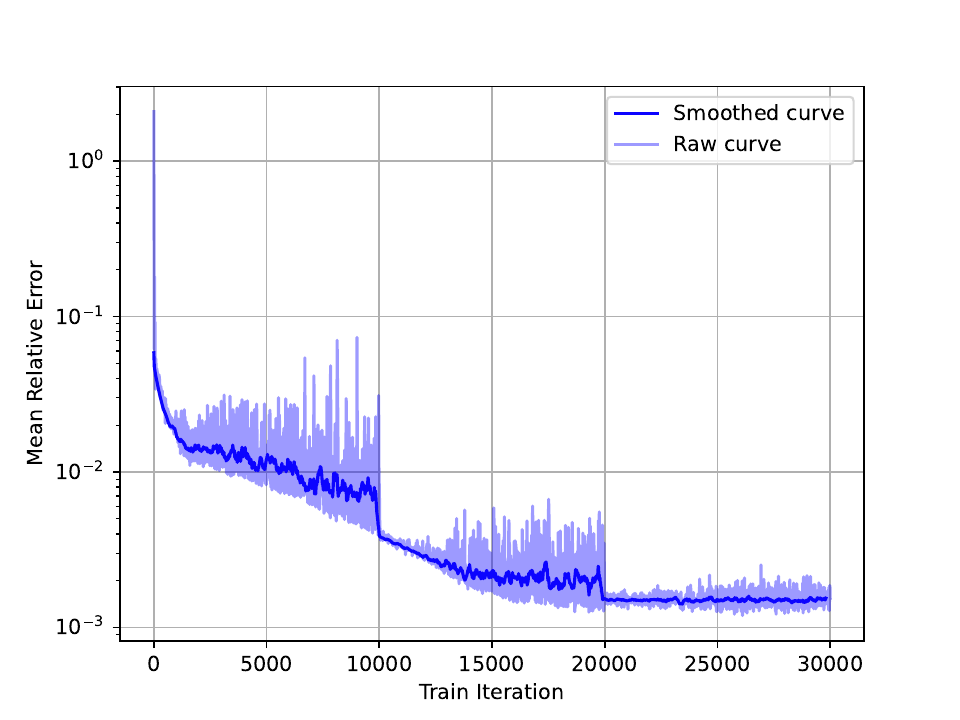}
            \caption{Relative error}
            \label{fig:hdPIDE_MRE}
        \end{subfigure}    \\
        \begin{subfigure}{0.45\textwidth}
            \centering
            \includegraphics[width=\textwidth]{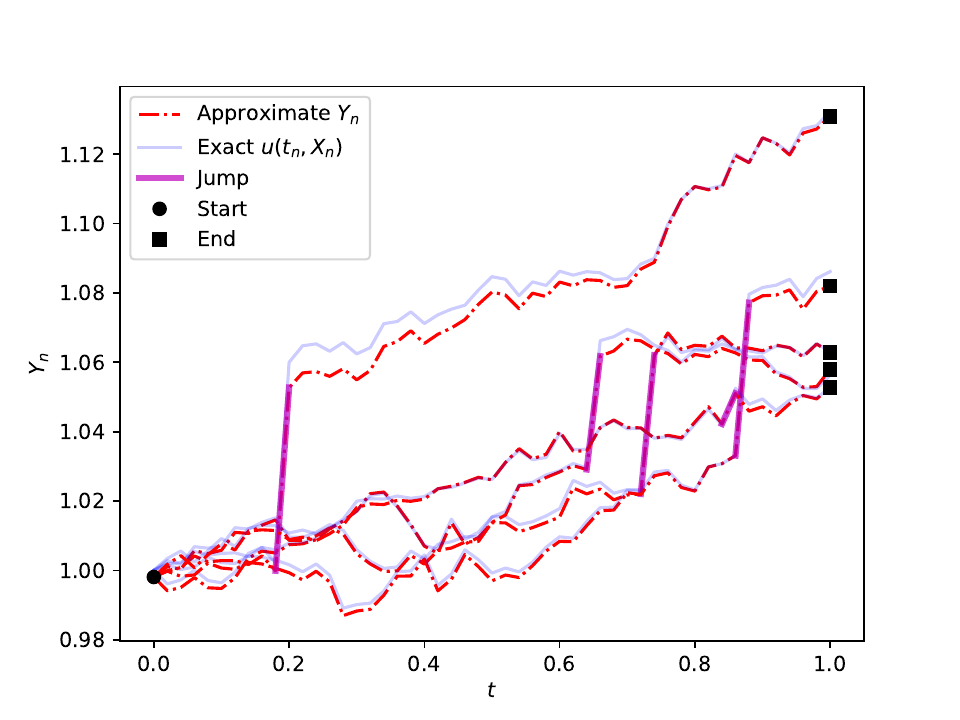}
            \caption{Sample path fitting}
            \label{fig:hdPIDE_sample}
        \end{subfigure} &
        \begin{subfigure}{0.45\textwidth}
            \centering
            \includegraphics[width=\textwidth]{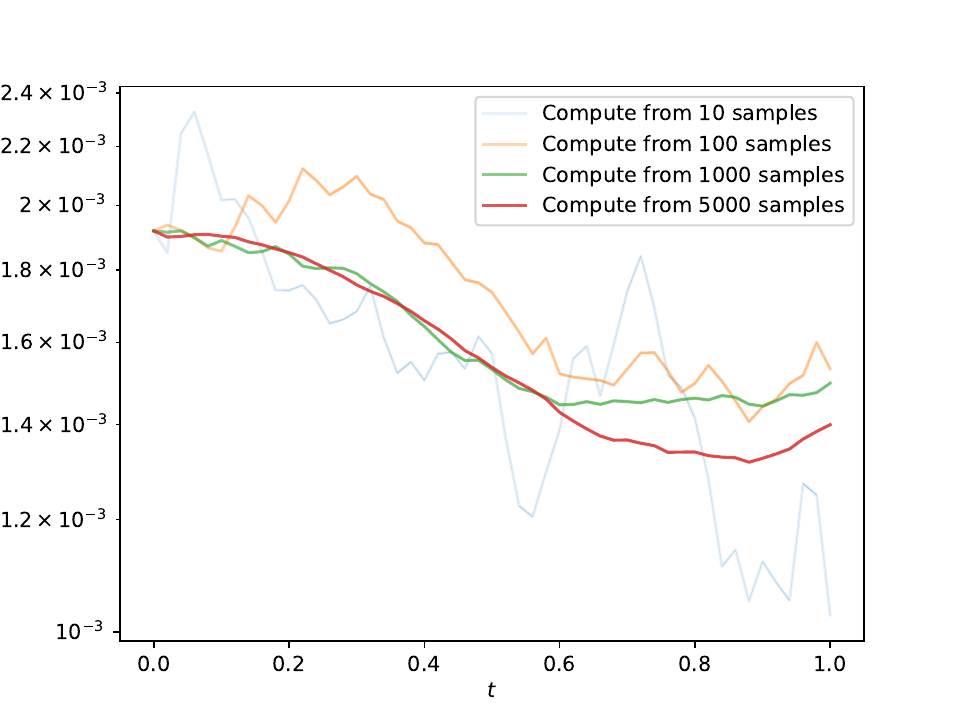}
            \caption{Relative error at time nodes}
            \label{fig:hdPIDE_erbyt}
        \end{subfigure}  \\
    \end{tabular}
    \caption{Solution of the high-dimensional partial integro-differential equations.}
    \label{fig:highdimPIDE_loss_MRE}
\end{figure}

The results in Fig. \ref{fig:highdimPIDE_loss_MRE} demonstrate the effectiveness of the FBSJNN method. The loss function and mean relative error decrease steadily and stabilize at very low values after approximately $25,000$ training iterations, indicating the method's convergence and high accuracy. In Fig. \ref{fig:highdimPIDE_loss_MRE} (C), the approximate solution \( Y_t \) aligns closely with the exact solution \( u(t, x) \), effectively capturing the jump dynamics and boundary behavior. Furthermore, Fig. \ref{fig:highdimPIDE_loss_MRE} (D) shows that the relative error decreases as the sample size increases, with significant improvements observed as the sample size grows from $10$ to $5,000$, highlighting the method's robustness and scalability. Overall, FBSJNN achieves accurate solutions, demonstrating its capability to handle the complexity of high-dimensional PIDEs.

\subsection{High-dimensional Black-Scholes-Barenblatt equations}
In final example, we consider high-dimensional Black-Scholes-Barenblatt equation with jumps, adapted from Raissi \cite{raissiForwardBackwardStochastic2018}. The corresponding FBSDEJ is discretized by
\begin{equation}
    \label{eqn:highdimBSB}
    \left\{
    \begin{aligned}
        X_{n+1} = X_n +   & r X_n \Delta t + \tau \text{diag}(X_n) \Delta W_n
        +\sum_{i=1}^{\mu_n} z_i - \lambda \mu_\phi \Delta t,\\
        Y_{n+1} = Y_{n} + & \big(r Y_n + \lambda e^{(r+\tau^2)(T-t)}(\mu_\phi^2+\sigma_\phi^2)) \Delta t + Z_n^\top \Delta W_n \\
                          & \qquad\qquad\qquad\qquad\qquad\qquad+ \sum_{i=1}^{\mu_n} \hat{u}_n(z_i) - \lambda \mu_\phi  \nabla_x Y_{n}^\top\mathbf{1} \Delta t.
    \end{aligned}
    \right.
\end{equation}
The terminal condition is $u(T,x) = \frac{1}{d}||x||^2$, and its true solution is $u(t,x) = \frac{1}{d} e^{(r+\tau^2)(T-t)} ||x||^2$. We still take $d = 100, ~T=1, ~X_0 = \mathbf{1}, ~\lambda = 0.3,~r=0.05,~ \tau = 0.4,~ \mu_\phi = 0.02, ~\sigma_\phi = 0.01$, and divide time equally into $N = 50$ intervals. We use the Adam optimizer for a neural network with $5$ hidden layers, each with $128$ neurons, and a Leaky ReLU activation function. After $5000$ iterations, we obtain the final results which are presented in  Table \ref{tab:highBSB}. From Table \ref{tab:highBSB}, we observe that after 5000 iterations, a numerical solution with a relative error of 0.13\% at the initial value is obtained.

\begin{table}[h]
    \centering
    \caption{Numerical results of neural network approximation for high-dimensional B-S-B equations}
    \label{tab:highBSB}
    \begin{tabular}{ccccc}
        \toprule
        Iteration & Loss Function & Relative Error & Relative Error at $t_0$ & Learning Rate \\
        \midrule
        0         & 5.57e-03      & 37.91\%        & 37.62\%                 & 1.00e-03      \\
        1000      & 2.00e-05      & 1.14\%         & 1.00\%                  & 1.00e-03      \\
        2000      & 2.00e-05      & 1.32\%         & 1.30\%                  & 1.00e-03      \\
        3000      & 1.30e-05      & 0.80\%         & 0.02\%                  & 1.00e-04      \\
        4000      & 1.20e-05      & 0.82\%         & 0.19\%                  & 1.00e-04      \\
        5000      & 1.30e-05      & 0.77\%         & 0.13\%                  & 1.00e-05      \\
        \bottomrule
    \end{tabular}
\end{table}

\subsection{Time discretization error test} From Theorem \ref{thm:Consistency of the FBSJNN} we know that the final approximation error of the FBSJNN method also depends on the time step-size $\Delta t$. In this subsection, we will use the numerical results of the two-dimensional B-S-B problems \eqref{eqn:highdimBSB} to verify this.

We vary the time step-size to investigate its impact on the numerical solution. In deep learning practice, various factors can influence the final learning performance of a neural network. To ensure a fair comparison, we fix the neural network architecture to consist of two hidden layers, each with $32$ neurons, and adopt the Tanh activation function. Each model is trained for $2000$ iterations using the Adam optimizer with a cosine annealing learning rate scheduler, which decreases from $10^{-2}$ to $10^{-5}$. The sample size is fixed at $1000$. For each time step-size, we conduct $10$ independent trainings, and take the minimum $3$ training results to calculate the mean and variance of the metrics. The results are illustrated in Fig. \ref{fig:different_N}. The max square error is defined like Theorem \ref{thm:Consistency of the FBSJNN} as follows:
$$\max_{n} \mathbb{E} \{|Y_n - \mathcal{Y}_n(\theta^*)|^2\}.$$

\begin{figure}[h]
    \centering
    \begin{tabular}{ccc}
        \begin{subfigure}{0.3\textwidth}
            \centering
            \includegraphics[width=\textwidth,trim=1.2cm 1.1cm 2cm 1.3cm, clip]{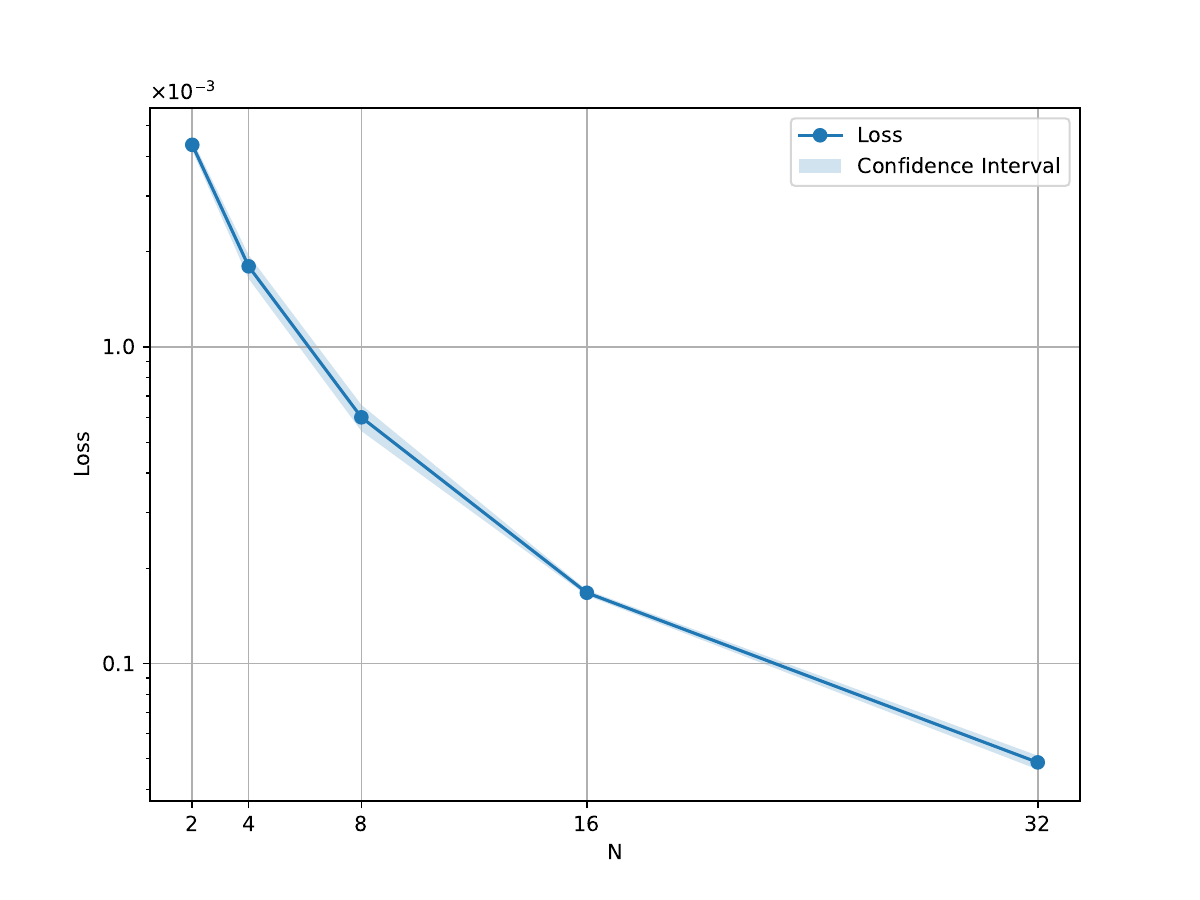}
            \caption{Loss function}
        \end{subfigure}   &
        \begin{subfigure}{0.3\textwidth}
            \centering
            \includegraphics[width=\textwidth,trim=1.2cm 1.1cm 2cm 1.3cm, clip]{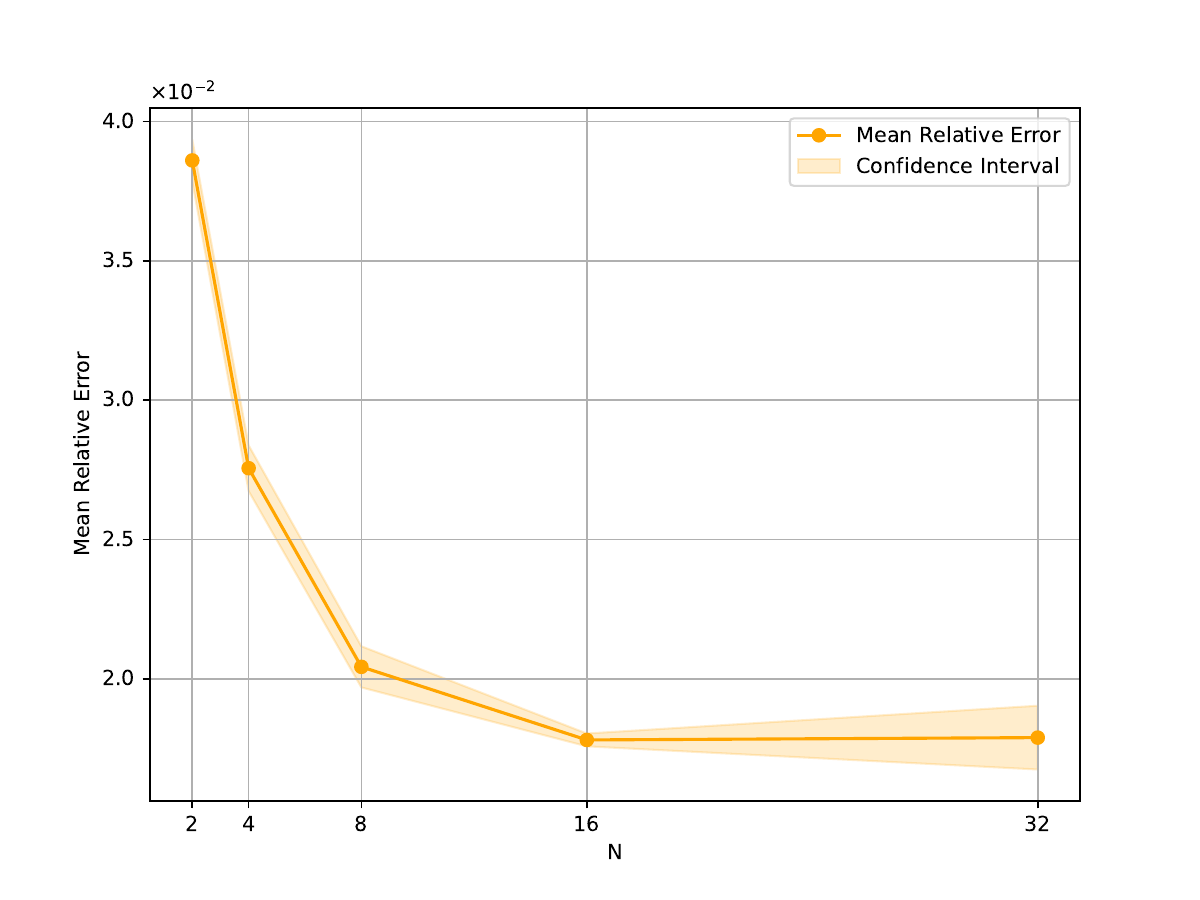}
            \caption{Mean relative error}
        \end{subfigure}   &
        \begin{subfigure}{0.3\textwidth}
            \centering
            \includegraphics[width=\textwidth,trim=1.2cm 1.1cm 2cm 1.3cm, clip]{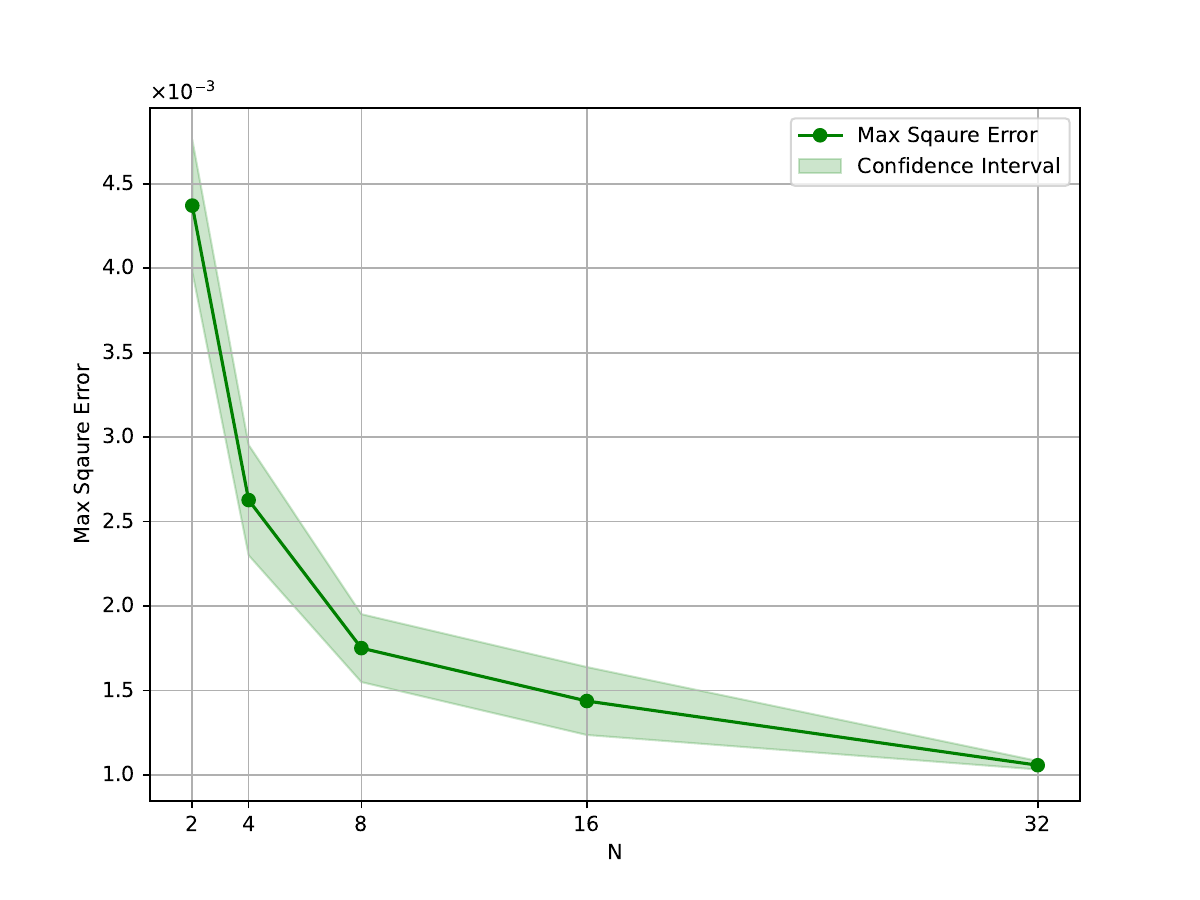}
            \caption{Max square error}
        \end{subfigure}   
    \end{tabular}
    \caption{Numerical results with different time step-size $\Delta t=1/N$. The horizontal coordinate is the number of time step $N$.}
    \label{fig:different_N}
\end{figure}
The three graphs in Fig. \ref{fig:different_N} show the variation of three metrics with different numbers of time step $N$. The graph (A) shows that the loss function decreases significantly as the number of time step increases. The graph (B) indicates that the mean relative error decreases as the number of time step increases. When the number of time step is $2$, the average relative error is approximately $0.04$. When the number of time step increases to $32$, the average relative error decreases to nearly $0.01$. The graph (C) shows that the maximum squared error also decreases as the number of time step increases. Overall, as the number of time step increases (i.e., the time step-size becomes smaller), all three metrics decrease, suggesting that the model performs better and has higher prediction accuracy with smaller time steps.

\begin{table}[h]
    \centering
    \caption{Relationship between max square error and time step-size, and the convergence order.}
    \label{tab:different_N_order}
    \begin{tabular}{cccc}
        \toprule
        Number of step $N$ & Time step-size & Max square error & Convergence order \\
        \midrule
        2 & 0.50000 & 4.371e-03 & - \\
        4 & 0.25000 & 2.628e-03 & 0.73 \\
        8 & 0.12500 & 1.752e-03 & 0.59 \\
        16 & 0.06250 & 1.438e-03 & 0.28 \\
        32 & 0.03125 & 1.058e-03 & 0.44 \\
        \bottomrule
        \end{tabular}
\end{table}
Table \ref{tab:different_N_order} shows the maximum square error and its corresponding convergence order for different time step-sizes. As the time step-size decreases, the maximum square error decreases as well, indicating an improvement in the accuracy of the numerical method. Specifically, as the time step reduces from $0.5$ to $0.03125$, the maximum square error decreases from 4.371e-03 to 1.058e-03. This is consistent with our theoretical finds.

At the same time, by calculating the order, we observe that the convergence order in error slows down, from $0.60$ to $0.44$. This suggests that while the accuracy improves, the convergence rate of the error becomes more complex at smaller time steps. This may be due to the optimizer not yet being able to find the optimal parameters of the loss function and the neural network approximation error being dominant when the time discretization error is small.

\section{Concluding remarks}

In this paper, we proposed the forward-backward stochastic jump neural network (FBSJNN) method, a novel deep learning-based method for solving PIDEs and FBSDEJs. In this method, only the solution itself is needed to approximate by neural network. This enables the method to reduce the total number of parameters in FBSJNN, which enhances optimization efficiency. Leveraging principles from stochastic calculus and the universal approximation theorem, we also showed that neural networks can effectively approximate the solutions to complex stochastic problems theoretically. We provided the consistency error estimates of the FBSJNN, highlighting the distinctions between method error and universal error in the context of neural network approximations. Extensive numerical experiments validate the method’s rapid convergence and high accuracy across diverse scenarios, underscoring its superiority over existing techniques.
Exploring the extension of this methodology to more general forms of stochastic differential equations and PIDEs and the development of adaptive algorithms for optimizing neural network architectures will be our future work.

\bibliographystyle{amsplain}
\bibliography{FBSJNN.bib}

\end{document}